\documentclass[11pt]{article}
\ifdefined\XeTeXversion\else\pdfoutput=1\fi
\usepackage[margin=1.05in]{geometry}
\usepackage{amsmath,amssymb,amsthm}
\usepackage[colorlinks=true,linkcolor=blue,citecolor=blue,urlcolor=blue]{hyperref}
\allowdisplaybreaks
\numberwithin{equation}{section}

\newtheorem{theorem}{Theorem}[section]
\newtheorem{lemma}[theorem]{Lemma}
\newtheorem{corollary}[theorem]{Corollary}
\newtheorem{proposition}[theorem]{Proposition}
\theoremstyle{remark}
\newtheorem{remark}[theorem]{Remark}

\newcommand{\tr}{\operatorname{t}}
\newcommand{\gr}{\operatorname{g}}
\newcommand{\lv}{\ell}
\newcommand{\diam}{\operatorname{diam}}
\newcommand{\ecc}{\operatorname{ecc}}
\newcommand{\Per}{\operatorname{Per}}

\newcommand{\dd}{\delta'}

\title{Three Graffiti.pc Conjectures on Largest Induced Trees:\\
Proofs of Conjectures 141, 142, and 143}
\author{Alper Ferudun\\\texttt{alper@mercurycodelab.com}}
\date{August 2026}

\begin{document}
\maketitle

\begin{abstract}
For a finite simple graph $G$, let $\tr(G)$ be the largest order of an induced
tree and let $\gr(G)$ be the girth.  We prove three consecutive conjectures of
DeLaVi\~na's Graffiti.pc program.  First, writing $\lv(v)$ for the
independence number of the subgraph induced by the neighbourhood of $v$, we
prove
\[
 \tr(G)\ge \left\lfloor\frac{\gr(G)}2\right\rfloor-1+
 \max_{v\in V(G)}\lv(v).
\]
Second, if $\Per(G)$ is the periphery and
$f(G)=\max_x d(x,\Per(G))$, we prove
\[
 \tr(G)\ge \frac23\gr(G)+f(G),
\]
and establish the stronger integral bound
$\tr(G)\ge f(G)+\lceil2\gr(G)/3\rceil$ when $G$ contains a cycle.
Third, if $\dd(G)$ is the second-smallest degree, counted with multiplicity,
then every connected non-tree graph satisfies
\[
 \tr(G)\,\dd(G)\ge \gr(G)+1.
\]
These are Conjectures 141, 142, and 143 of \emph{Written on the Wall II}.
Complete, machine-checked Lean~4 proofs of all three formal statements
accompany the manuscript.
\end{abstract}

\tableofcontents

\section{Overview}
Graffiti.pc, developed by DeLaVi\~na~\cite{DeL01,DeLhistory} as a successor of
Fajtlowicz's Graffiti program~\cite{Faj88}, generates conjectural inequalities
between graph invariants.  Its conjectures are collected in \emph{Written on
the Wall II}~\cite{WOWII}; a curated register is maintained by
West~\cite{WestReg}.  The three consecutive entries treated here concern the
maximum order of an induced tree, whose systematic study goes back to
Erd\H{o}s, Saks, and S\'os~\cite{ESS86}.

The three proofs are logically independent and are presented in self-contained
sections.  Conjecture~141 follows from a stronger maximum-degree--girth bound
for triangle-free cyclic graphs.  Conjecture~142 is driven by a rooted
shortest-cycle construction and a three-point metric estimate.  Conjecture~143
reduces to a two-leaf induced-tree lemma.  The associated Lean developments
are recorded in Formal Conjectures pull requests \#4454 and \#4457
\cite{FC4454,FC4457} and are included as ancillary files.

\paragraph{Use of generative AI.}
OpenAI GPT Pro and Codex were used for proof exploration, literature-search
assistance, computational checks, Lean development, and manuscript
preparation.  They are not authors.  Responsibility for verifying the work
and for the submitted version rests entirely with the named human author.

\section{Conjecture 141: neighbourhood independence}
\subsection{Introduction}

Graffiti.pc, developed by DeLaVi\~na~\cite{DeL01,DeLhistory} as a successor of
Fajtlowicz's program Graffiti~\cite{Faj88}, generates conjectural inequalities
between graph invariants; its conjectures are collected in the lists
\emph{Written on the Wall II}~\cite{WOWII}, with a curated register maintained
by West~\cite{WestReg}.  A run of these conjectures bounds from below the
invariant $\tr(G)$, the largest order of an induced subtree, whose systematic
study goes back to Erd\H{o}s, Saks, and S\'os~\cite{ESS86}.

Throughout, $G$ is a finite simple graph, $\gr(G)$ its girth, $\Delta(G)$ its
maximum degree, and for a vertex $v$ we write
$\lv(v)$ for the independence number of the subgraph induced by the
neighbourhood $N(v)$.  Conjecture~141 of Written on the Wall II, dated 2005,
asserts~\cite{WOWII}:

\begin{quote}
\emph{If $G$ is a simple connected graph, then
$\tr(G)\ \ge\ \lfloor \gr(G)/2\rfloor - 1 + \max_v \lv(v)$.}
\end{quote}

At the time of writing the conjecture is listed as open on DeLaVi\~na's
register (status ``O'')~\cite{WOWII} and is stated as a research-open item in
the Google DeepMind \emph{Formal Conjectures} repository~\cite{FC,FC141}.  We
prove it.
The main step is the following sharper bound, which does not involve
$\lv$ at all and may be of independent interest.

\begin{theorem}\label{w141-thm:main}
Let $G$ be a finite simple connected triangle-free graph that contains a
cycle.  Then
\[
  \tr(G)\;\ge\;\Delta(G)+\gr(G)-3 .
\]
Equality holds for every cycle $C_g$ ($g\ge4$) and every complete bipartite
graph $K_{a,b}$ with $a,b\ge2$.
\end{theorem}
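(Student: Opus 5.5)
The plan is to build the induced tree around a vertex $v$ of maximum degree $\Delta=\Delta(G)$. Since $G$ is triangle-free, $N(v)$ is independent, so $\{v\}\cup N(v)$ induces a star on $\Delta+1$ vertices. It then remains to attach $\gr(G)-4$ further vertices along a cycle without creating any extra adjacencies. I split into two cases according to whether $v$ lies on a cycle.

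\emph{Case 1: $v$ lies on some cycle.} Let $C=v u_1 u_2\cdots u_{h-1}v$ be a shortest cycle \emph{through $v$}. Then $h\ge \gr(G)=:g\ge 4$. Set
\[
T=\{v\}\cup N(v)\cup\{u_2,\dots,u_{h-3}\},
\]
which has $\Delta+1+(h-4)=\Delta+h-3\ge \Delta+g-3$ vertices. I will check that $T$ induces a tree, namely the path $v u_1 u_2\cdots u_{h-3}$ together with the leaves $N(v)\setminus\{u_1\}$ hanging at $v$. The only tool is minimality of $C$ among cycles through $v$.
\begin{itemize}
\item A chord $u_iu_j$ of $C$, or an edge $vu_j$ with $2\le j\le h-2$, splits $C$ into two cycles, one of which passes through $v$ and is strictly shorter than $C$. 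This is impossible. In particular $u_2,\dots,u_{h-3}\notin N(v)$, and $u_{h-3}$ is not adjacent to $u_{h-1}$.
\item Let $w\in N(v)\setminus\{u_1,u_{h-1}\}$ be adjacent to some $u_j$ with $2\le j\le h-3$. Then $v u_1\cdots u_j w v$ is a cycle through $v$ of length $j+2\le h-1$. This is a contradiction.
\item $N(v)$ is independent because $G$ is triangle-free.
\end{itemize}
Hence the induced subgraph on $T$ is exactly the stated tree, and $\tr(G)\ge \Delta+g-3$.

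\emph{Case 2: $v$ lies on no cycle.} Then every edge at $v$ is a bridge, so distinct neighbours of $v$ lie in distinct components of $G-v$. Fix a shortest cycle $C$ (of length $g$). Let $P=v w_1\cdots x$ be a shortest path from $v$ to $C$, with $x\in C$. Delete from $C$ one neighbour of $x$ on $C$; the remaining $g-1$ vertices form an induced path $Q$, since $C$ is a shortest cycle and therefore chordless. Take $T=\{v\}\cup N(v)\cup V(P)\cup V(Q)$. Its internal structure is as follows.
\begin{itemize}
\item $P$ is a geodesic, so it is an induced path.
\item Vertices of $P$ other than $x$ have no neighbours on $C$, since otherwise $P$ could be shortened.
\item The neighbours of $v$ other than $w_1$ lie in components of $G-v$ not containing $P-v$ or $C$.
\end{itemize}
So $T$ induces a tree with at least $\Delta+1+(g-1)-1\ge \Delta+g-3$ vertices. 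The final $-1$ allows for the case $w_1=x$, where $w_1$ is counted in both $N(v)$ and $Q$.

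For the equality statements, I would argue directly in each family. In $C_g$, an induced tree misses at least one vertex, so it has at most $g-1=\Delta+g-3$ vertices. In $K_{a,b}$ with $a,b\ge2$, an induced tree cannot contain two vertices on each side, since these would span a $C_4$. So it is a star with at most $\max(a,b)+1=\Delta+1=\Delta+4-3$ vertices.

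The main obstacle I expect is Case 1. The natural first attempt uses a globally shortest cycle, but $v$ need not lie on one, and then the star at $v$ and the cycle interact badly. Switching to a shortest cycle \emph{through $v$} makes every unwanted adjacency produce a shorter cycle through $v$. This is the key choice, and the case analysis above verifies that it suffices.
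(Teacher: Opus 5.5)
Case 1 and the equality checks are correct. Case 1 in fact proves the stronger bound $\tr(G)\ge\deg(v)+h-3$, where $h$ is the length of a shortest cycle through $v$. The gap is in Case 2. Your bullet ``vertices of $P$ other than $x$ have no neighbours on $C$, since otherwise $P$ could be shortened'' is false as stated. The penultimate vertex $p$ of $P$ is adjacent to $x$, and a \emph{second} neighbour $y\in V(C)$ of $p$ would not shorten $P$ at all.

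When $g=4$ this happens, with $y$ the antipode of $x$ on $C$, and then your set is not a tree. Whichever neighbour of $x$ on $C$ you delete, $Q$ is a path $x\,c\,y$ through the other neighbour $c$, and $p\,x\,c\,y$ is a $4$-cycle in $G[T]$. For a concrete instance, take the $4$-cycle $C=xayb$, a vertex $p$ adjacent to $x$ and $y$, and a vertex $v$ adjacent to $p$ and to three pendant vertices. This graph is connected and triangle-free with girth $4$. Its vertex $v$ is the unique vertex of maximum degree $\Delta=4$ and lies on no cycle. Choosing $C=xayb$ and $P=vpx$, your $T$ contains the cycle $pxby$ or $pxay$.

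The repair is short. For $g\ge5$, a second neighbour $y$ of $p$ on $C$ would close, with $px$, $py$ and the shorter $x$--$y$ arc of $C$, a cycle of length at most $\lfloor g/2\rfloor+2<g$. So $p$ meets $C$ only in $x$; and if $P$ has length one, then $p=v$ lies on no cycle anyway. This girth argument, not a shortening of $P$, is what the step needs, and with it Case 2 even gives $|T|\ge\Delta+g-1$. For $g=4$ the target is only $\Delta+1$, which the star $N[v]$ already provides. Alternatively, delete the antipode of $x$ instead of a neighbour of $x$.

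With this fix your argument is a valid proof, by a different route from the paper's. The paper avoids any case split: it takes a maximum induced tree containing $N[v]$ and finds an outside vertex with two neighbours on it. The tree path closing that cycle has at least $g-1$ vertices, of which at most three lie in $N[v]$. Your explicit construction costs a case distinction but gives finer information, namely the dependence on the shortest cycle through $v$ in Case 1 and the slack of two in Case 2.
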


\begin{corollary}[Conjecture 141]\label{w141-cor:141}
Every finite simple connected graph $G$ on at least two vertices satisfies
$\tr(G)\ \ge\ \lfloor \gr(G)/2\rfloor - 1 + \max_v \lv(v)$, where
$\gr(G):=0$ if $G$ is acyclic.
\end{corollary}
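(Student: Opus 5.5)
The plan is to derive Corollary~\ref{w141-cor:141} from Theorem~\ref{w141-thm:main} by a case split on the girth. Write $L=\max_v \lv(v)$ and $g=\gr(G)$.

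\emph{Acyclic case.} If $G$ is acyclic, then $G$ itself is a tree, so $\tr(G)=n$. The right-hand side is $-1+L$, and $L\le \Delta(G)\le n-1$. Hence the inequality holds trivially.

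\emph{Case $g=3$.} Here the target is $\tr(G)\ge L$. Choose $v$ with $\lv(v)=L$ and an independent set $I\subseteq N(v)$ of size $L$. Then $\{v\}\cup I$ induces a star, which is a tree on $L+1$ vertices. So $\tr(G)\ge L+1$, which is more than needed.

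\emph{Case $g\ge4$.} Now $G$ is triangle-free and contains a cycle. In a triangle-free graph every neighbourhood is independent, so $\lv(v)=\deg(v)$ and $L=\Delta(G)$. Theorem~\ref{w141-thm:main} gives $\tr(G)\ge \Delta+g-3$. It therefore suffices to check
\[
g-3\ \ge\ \lfloor g/2\rfloor-1,
\]
that is, $g-\lfloor g/2\rfloor\ge 2$. This is $\lceil g/2\rceil\ge2$, which holds for all $g\ge 3$.

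There is no real obstacle here; all the work sits in Theorem~\ref{w141-thm:main}. The only points needing care are the conventions. First, $\gr=0$ for forests, which the statement fixes. Second, the hypothesis of at least two vertices, which makes $L\ge1$. Neither is actually needed in the argument above, beyond consistency with the statement.
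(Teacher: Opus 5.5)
Your proof is correct and follows the paper's route. It splits on the girth, uses the induced star at a vertex maximizing $\lv$ for small girth, and handles the rest with Theorem~\ref{w141-thm:main}, via $\lv(v)=\deg(v)$ in triangle-free graphs and $\gr-3\ge\lfloor\gr/2\rfloor-1$. The only differences are cosmetic: the paper uses the star bound for acyclic $G$ and for all $3\le\gr\le5$, invoking the theorem only when $\gr\ge6$, whereas you invoke it from $\gr=4$ and use the whole tree in the acyclic case.
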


The deduction of the corollary is short: for girth $\le5$ (and for acyclic
graphs) an induced star at a vertex maximizing $\lv$ already beats the bound,
while for girth $\ge6$ the graph is triangle-free, so $\lv(v)=\deg(v)$ for
every $v$, and Theorem~\ref{w141-thm:main} applies with
$\gr-3\ge\lfloor\gr/2\rfloor-1$.

The proof of Theorem~\ref{w141-thm:main} is a maximality argument in the spirit of
our proof of Graffiti.pc's Conjecture~143~\cite{FC4454}: a maximum-order induced
tree containing the closed neighbourhood of a maximum-degree vertex either
spans (impossible in a cyclic graph) or admits an outside vertex with two
neighbours on the tree; the tree path joining those two neighbours closes a
cycle, hence has length at least $\gr(G)-2$, and it can share at most three
vertices with the closed neighbourhood.

\paragraph{Related work.}
The foundational paper~\cite{ESS86} bounds $\tr(G)$ in terms of order, size,
radius, and independence and clique numbers; we are not aware of a published
lower bound for $\tr$ combining girth with the maximum degree or with
neighbourhood independence, and the closest Graffiti.pc antecedents (the
induced-forest bounds of DeLaVi\~na--Waller~\cite{DW04}; see also
Hertz--Marcotte--Schindl~\cite{HMS14}) concern induced forests rather than
trees.  DeLaVi\~na's resolved-conjecture lists do not contain
Conjecture~141~\cite{WOWII}.  As with any short elementary argument, we do
not claim it could not have been observed before.

\paragraph{Verification.}
Theorem~\ref{w141-thm:main} and Corollary~\ref{w141-cor:141} have been formalized and
machine-checked in Lean~4~\cite{Lean4} on top of Mathlib~\cite{mathlib},
against the pre-existing formal statement \texttt{conjecture141} of the
Formal Conjectures repository~\cite{FC141}; see Section~\ref{w141-sec:formal}.
Independently, the conjecture and all intermediate claims were verified
exhaustively by exact computation on all $995$ connected graphs on $2$--$7$
vertices and on structured families (cages, prisms, generalized theta graphs,
cycles with pendant trees, complete bipartite graphs, and seeded random
graphs); no violation was found, and every equality case located has girth
exactly~$4$.  The computations play no role in the proofs.

\subsection{Notation}

All graphs are finite and simple.  For $S\subseteq V(G)$, $G[S]$ is the
induced subgraph; $S$ \emph{induces a tree} if $G[S]$ is connected and
acyclic, and $\tr(G)=\max\{|S|:G[S]\text{ is a tree}\}$.  The girth of a
graph containing a cycle is the minimum length of a cycle; following the
Mathlib convention used by the formal statement, an acyclic graph has girth
$0$.  $N(v)$ and $N[v]=N(v)\cup\{v\}$ are the open and closed
neighbourhoods; $\lv(v)$ is the independence number of $G[N(v)]$.  Note
$\lv(v)\le\deg(v)$ always, with equality when $N(v)$ is independent, i.e.\
for all $v$ when $G$ is triangle-free.

\subsection{The star lemma}

\begin{lemma}\label{w141-lem:star}
For every vertex $v$ of any graph $G$: $\tr(G)\ \ge\ \lv(v)+1$.
\end{lemma}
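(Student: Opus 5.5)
We prove the lemma by exhibiting an explicit induced star centred at $v$. Let $I\subseteq N(v)$ be a maximum independent set of $G[N(v)]$, so $|I|=\lv(v)$, and put $S=I\cup\{v\}$. We will show that $G[S]$ is a tree on $\lv(v)+1$ vertices; since $\tr(G)$ is the maximum order of an induced tree, this gives $\tr(G)\ge\lv(v)+1$.

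To check that $G[S]$ is a tree, note that $S$ is a subset of $V(G)$, so $G[S]$ is an induced subgraph by definition. Every $u\in I$ lies in $N(v)$, so $G[S]$ contains all edges $vu$ with $u\in I$. Because $I$ is independent, there are no edges between two vertices of $I$. Since $G$ is simple, there is no loop at $v$. Hence the edge set of $G[S]$ is exactly $\{vu:u\in I\}$, and $G[S]$ is the star $K_{1,|I|}$. A star is connected, since every vertex is adjacent to $v$, and acyclic, since it has $|S|-1$ edges and is connected. So $G[S]$ is a tree with $|S|=\lv(v)+1$ vertices; this holds because $v\notin N(v)$ in a simple graph, so $v\notin I$.

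The degenerate case $\lv(v)=0$ (an isolated vertex $v$) gives $S=\{v\}$, a one-vertex tree, which is still a valid witness. No step is a real obstacle. The only point needing care is to fix conventions consistent with the formal statement: $\lv(v)$ is attained by some independent set of $G[N(v)]$, and the single-vertex graph counts as a tree.
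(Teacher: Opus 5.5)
Your proof is correct and is essentially the paper's argument: both take a maximum independent set $I$ of $G[N(v)]$ and observe that $\{v\}\cup I$ induces the star $K_{1,|I|}$, a tree on $\lv(v)+1$ vertices. Your extra checks ($v\notin I$, and the degenerate case $\lv(v)=0$) are details the paper leaves implicit.
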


\begin{proof}
Let $S\subseteq N(v)$ be independent with $|S|=\lv(v)$.  Then
$G[\{v\}\cup S]$ is a star: all edges $vs$ ($s\in S$) are present and there
are no edges inside $S$.  A star is a tree.
\end{proof}

\subsection{Proof of Theorem~\texorpdfstring{\ref{w141-thm:main}}{1.1}}

\begin{proof}[Proof of Theorem~\ref{w141-thm:main}]
Write $g=\gr(G)\ge4$ and $\Delta=\Delta(G)$.  Fix a vertex $v$ of degree
$\Delta$.  Since $G$ is triangle-free, $N(v)$ is independent, so by the proof
of Lemma~\ref{w141-lem:star} the closed neighbourhood $N[v]$ induces a star, hence
a tree.  Among all sets $S\supseteq N[v]$ inducing a tree choose one, say
$S^*$, of maximum cardinality, and let $T=G[S^*]$.

$S^*$ is a proper subset of $V(G)$: an induced spanning tree would make $G$
itself a tree, contradicting the existence of a cycle.  Since $G$ is
connected, some vertex $z\notin S^*$ has a neighbour in $S^*$; and $z$ must
have two distinct neighbours $a,b\in S^*$, for otherwise $G[S^*\cup\{z\}]$
would be the tree $T$ with a pendant vertex attached, an induced tree
containing $N[v]$ and larger than $T$ --- contradicting maximality.

Let $P$ be the unique $a$--$b$ path in $T$.  The edges of $P$ together with
$za$ and $zb$ form a cycle of length $|E(P)|+2$, so
\begin{equation}\label{w141-eq:girthbound}
  |E(P)|\;\ge\;g-2 .
\end{equation}

Next, $|V(P)\cap N[v]|\le 3$.  Every edge of $G$ between two vertices of
$S^*$ is an edge of the tree $T$ (as $T$ is induced), and paths between fixed
endpoints in a tree are unique.  If $v\notin V(P)$ and $P$ contained two
distinct vertices $u_1,u_2\in N(v)$, then the segment of $P$ between $u_1$
and $u_2$ would be the unique $u_1$--$u_2$ path in $T$; but $u_1vu_2$ is also
a $u_1$--$u_2$ path in $T$, so the segment would pass through $v$,
contradicting $v\notin V(P)$; hence $|V(P)\cap N[v]|\le1$ in this case.  If
$v\in V(P)$, then for any $u\in V(P)\cap N(v)$ the segment of $P$ between $v$
and $u$ is the unique $v$--$u$ path of $T$, which is the single edge $vu$;
so $u$ is adjacent to $v$ \emph{along} $P$, and a path has at most two edges
at any vertex: $|V(P)\cap N(v)|\le2$, so $|V(P)\cap N[v]|\le3$.

Finally, count.  $V(P)$ and $N[v]$ are subsets of $S^*$, so by
inclusion--exclusion and~\eqref{w141-eq:girthbound},
\[
  |S^*|\;\ge\;|V(P)\cup N[v]|\;=\;|V(P)|+|N[v]|-|V(P)\cap N[v]|
  \;\ge\;(g-1)+(\Delta+1)-3\;=\;\Delta+g-3 .
\]
Since $T$ is an induced tree, $\tr(G)\ge|S^*|\ge\Delta+g-3$.

For sharpness: in $C_g$ ($\Delta=2$) the largest induced trees are the paths
obtained by deleting one vertex, so $\tr=g-1=\Delta+g-3$; in $K_{a,b}$ with
$a\le b$ ($g=4$, $\Delta=b$) every induced tree is a star (two vertices on
each side already induce $C_4$), so $\tr=b+1=\Delta+g-3$.
\end{proof}

\begin{remark}
Under the hypotheses of Theorem~\ref{w141-thm:main}, an alternative proof of the
slightly weaker bound $\tr\ge\Delta+\lfloor g/2\rfloor-1$ (still sufficient
for Corollary~\ref{w141-cor:141}) is worth recording.  For
$r=\lfloor g/2\rfloor-1$, the ball $B(v,r)$ induces a tree in any graph of
girth $g$: every $G$-edge inside the ball is an edge of a breadth-first
search tree rooted at $v$, since a non-tree edge would close a cycle of
length at most $2r+1<g$ through the last common ancestor of its endpoints.
Now take $v$ of maximum degree; triangle-freeness gives $g\ge4$, so
$r\ge1$.  If $G$ contains a cycle the ball is proper, so all $r$ distance
layers are nonempty and $|B(v,r)|\ge1+\Delta+(r-1)$.
\end{remark}

\subsection{Proof of Corollary~\texorpdfstring{\ref{w141-cor:141}}{1.2}}

\begin{proof}[Proof of Corollary~\ref{w141-cor:141}]
Let $L=\max_v\lv(v)$; note $L\ge1$ since $G$ is connected on $\ge2$
vertices.  If $G$ is acyclic then $\gr=0$ and the claimed bound is
$L-1\le\tr$, which follows from Lemma~\ref{w141-lem:star}.  If
$3\le\gr\le5$ then $\lfloor \gr/2\rfloor-1\le1$ and Lemma~\ref{w141-lem:star}
again gives $\tr\ge L+1\ge\lfloor \gr/2\rfloor-1+L$.  If $\gr\ge6$ then $G$
is triangle-free, so $\lv(v)=\deg(v)$ for every $v$ and $L=\Delta$; by
Theorem~\ref{w141-thm:main},
$\tr\ge\Delta+\gr-3\ge\Delta+\lfloor \gr/2\rfloor-1=L+\lfloor
\gr/2\rfloor-1$, using $\gr-3\ge\lfloor \gr/2\rfloor-1$ for $\gr\ge4$.
\end{proof}

\begin{remark}
The corollary is sharp precisely in girth~$4$.  Equality holds for $C_4$, for
every $K_{a,b}$ ($a,b\ge2$), and for the infinite family obtained from $C_4$
by attaching $k\ge0$ pendant vertices to one vertex ($\lv_{\max}=k+2$,
$\tr=k+3$); an exhaustive check of all connected graphs on at most seven
vertices finds no equality case of any other girth.  That equality
\emph{requires} girth~$4$ follows from the results above: for acyclic $G$
and for $g=3$ the star bound $\tr\ge L+1$ beats the right-hand side by at
least $2$ resp.\ $1$; for $g=5$ triangle-freeness gives $L=\Delta$ and
Theorem~\ref{w141-thm:main} yields $\tr\ge\Delta+2=L+\lfloor5/2\rfloor$, one more
than required; and for $g\ge6$ the inequality $g-3>\lfloor g/2\rfloor-1$ is
strict.
\end{remark}

\subsection{Formalization}\label{w141-sec:formal}

The Formal Conjectures project~\cite{FC} maintains Lean~4 formalizations of
open conjectures.  Conjecture~141 appears in \texttt{GraphConjecture141.lean}
as \texttt{conjecture141}, marked \texttt{research open}~\cite{FC141}, in
the denominator-explicit integer form
\[
  \lfloor\gr(G)/2\rfloor-1+\max_v\lv(v)\le\tr(G).
\]
Here $\tr$ and $\lv$ are the repository definitions
\texttt{largestInducedTree\allowbreak Size} and
\texttt{indepNeighbors\allowbreak Card}; Mathlib girth is natural-valued and
equals zero on acyclic graphs.

We have produced a complete, \texttt{sorry}-free Lean~4 proof of exactly this
statement, machine-checked with Lean toolchain v4.27.0 against current
Mathlib, together with the supporting API: the star construction, the
maximum-induced-tree selection with prescribed vertices, the two-neighbour
maximality obstruction, the tree-path girth certificate, and the
three-vertex overlap bound of Theorem~\ref{w141-thm:main}.  The development
builds on the reusable induced-tree API we contributed alongside
Conjecture~143~\cite{FC4454}.  The proof uses no \texttt{native\_decide} and no
additional axioms (\texttt{\#print axioms} reports \texttt{propext},
\texttt{Classical.choice}, \texttt{Quot.sound}).  The source is available as
pull request \#4454 to the Formal Conjectures repository (which also contains
our formalization of Conjecture~143) and as ancillary files with this
submission.

\paragraph{Acknowledgements.}
The author thanks the maintainers of the Formal Conjectures repository for
the formal statements, and Ermelinda DeLaVi\~na and Douglas B.\ West for
maintaining the Graffiti.pc conjecture lists.

\section{Conjecture 142: distance from the periphery}
\subsection{Introduction}

Graffiti.pc, developed by DeLaVi\~na~\cite{DeL01,DeLhistory} as a successor
of Fajtlowicz's Graffiti program~\cite{Faj88}, generates conjectural
inequalities among graph invariants.  Its conjectures are collected in
\emph{Written on the Wall II}~\cite{WOWII}; a curated selection is maintained
by West~\cite{WestReg}.  Several consecutive entries concern the maximum
order of an induced tree, an invariant systematically studied by Erd\H{o}s,
Saks, and S\'os~\cite{ESS86}.

Let $G$ be a finite simple connected graph.  The \emph{periphery}
$\Per(G)$ is the set of vertices of eccentricity $\diam(G)$, and the
set-eccentricity of the periphery is
\[
 f(G):=\max_{x\in V(G)} d(x,\Per(G)).
\]
Conjecture~142 of Written on the Wall II asserts
\[
  \tr(G)\ge \frac23\gr(G)+f(G).
\]
Here and in the formal statement, the girth of an acyclic graph is taken to
be zero.  We prove the following stronger cyclic statement.

\begin{theorem}[Graffiti.pc Conjecture 142]\label{w142-thm:main}
Let $G$ be a finite simple connected graph containing a cycle.  If
$g=\gr(G)$ and $f=f(G)$, then
\[
  \tr(G)\ge f+\left\lceil\frac{2g}{3}\right\rceil .
\]
Consequently every finite simple connected graph satisfies
$\tr(G)\ge \frac23\gr(G)+f(G)$.
\end{theorem}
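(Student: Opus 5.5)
The plan is to build one explicit induced tree out of two pieces: a geodesic that realises $f$, and a long arc of a shortest cycle. First, the acyclic case of the final sentence is trivial, since then $\gr=0$ and $\tr(G)\ge f$ is witnessed by any geodesic, which is an induced path. So assume $G$ has a cycle, and fix a shortest cycle $C$ of length $g$; it is an induced cycle. Fix a vertex $x$ with $d(x,\Per(G))=f$.

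\emph{Step 1 (rooted shortest-cycle construction).} Let $c\in V(C)$ be a vertex of $C$ nearest to some chosen root $r$, and let $Q$ be a geodesic from $r$ to $c$, of length $h=d(r,C)$. I would check which vertices of $C$ can be adjacent to $Q\setminus\{c\}$:
\begin{itemize}
\item a vertex of $Q$ at distance $\ge2$ from $C$ has no neighbour on $C$, by the choice of $c$;
\item the vertex $q_1$ of $Q$ adjacent to $c$ can only be adjacent to a vertex $c'\in C$ with $d_C(c,c')\ge g-2$, by girth.
\end{itemize}
Deleting from $C$ a short arc "opposite'' $c$ therefore leaves an arc $A$ through $c$ such that $Q\cup A$ induces a tree. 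The deleted arc has about $g/3$ vertices, chosen to kill every possible chord $q_1c'$, and this is where the $\lceil 2g/3\rceil$ comes from. With this choice
\[
 \tr(G)\ge h+|A|\ge h+\lceil 2g/3\rceil ,
\]
where the arc is cut generously enough to cover the small-girth cases $g\le5$ as well.

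\emph{Step 2 (choice of root via a three-point estimate).} It remains to pick $r$ with $h=d(r,C)\ge f$, or to find a tree of size $f+\lceil2g/3\rceil$ some other way when no such $r$ exists. Here I would use the periphery. Take a diametral pair $u,w$ with $d(u,w)=\diam(G)$; both lie in $\Per(G)$, so $d(x,u)\ge f$ and $d(x,w)\ge f$. The three-point estimate to aim for is that for every vertex $c\in C$,
\[
 \max\{d(c,u),d(c,w)\}\ \ge\ f+d(c,C\text{-far side})-\text{(correction}\le g/3),
\]
or, more usably, that $\max\{d(u,C),d(w,C),d(x,C)\}\ge f-(\text{something absorbed by the slack})$. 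I would try to derive this from the triangle inequality applied to $x$, $u$, $w$ and the cycle, using that $d(x,\cdot)$ and the eccentricity of vertices on $C$ cannot all be small while $u,w$ realise the diameter. Then I take $r\in\{x,u,w\}$ to be the vertex farthest from $C$.

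\emph{Step 3 (fallback).} If Step 2 only gives $h\ge f-k$ for some $k$, I would extend the tree differently: use a geodesic from $x$ to $u$, which has at least $f+1$ vertices, and graft onto it an arc of $C$ attached at its nearest point, recovering the missing $k$ from the arc length.

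The main obstacle is Step 2: relating $f$, which is defined through eccentricities, to distances from the cycle. This is where the "three-point metric estimate'' must do real work, and I expect some case analysis according to whether $C$ lies closer to $x$ or to the diametral pair.
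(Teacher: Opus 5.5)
There is a genuine gap at Step~2/Step~3, which carry the whole difficulty. The trees you propose cannot reach $f+\lceil 2g/3\rceil$ in general, so no three-point estimate of the kind you sketch can exist.

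Take $C_8=v_0v_1\cdots v_7$ and attach three paths: one of length $h\ge2$ at $v_0$ ending in $u$, one of length $h$ at $v_4$ ending in $w$, and one of length $h+1$ at $v_2$ ending in $x$. Then:
\begin{itemize}
\item $\diam(G)=d(u,w)=2h+4$, and every other vertex has eccentricity at most $2h+3$ (this uses $h\ge2$), so $\Per(G)=\{u,w\}$.
\item $d(x,u)=d(x,w)=2h+3$, so $f=2h+3$ and the target is $\tr(G)\ge 2h+9$.
\end{itemize}
Your constructions all fall short:
\begin{itemize}
\item \emph{Step~1.} Here $\max_r d(r,C)=h+1$, so a single branch plus an arc gives at most $(h+1)+7=h+8$ vertices, even keeping all of $C$ but one vertex. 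Moreover $f-\max_r d(r,C)=h+2$ is unbounded, so the ``correction'' in Step~2 cannot be absorbed by any slack.
\item \emph{Step~3.} The unique geodesic from $x$ to $u$ has $2h+4$ vertices and already contains $v_0,v_1,v_2$. Grafting the rest of $C$ minus one vertex gives $2h+8=f+\lceil 2g/3\rceil-1$, one short. The diametral geodesic plus an arc gives only $2h+7$.
\end{itemize}
Incidentally, the $\frac23$ in Step~1 does not come from killing chords. For $g\ge5$, $q_1$ has no cycle neighbour other than $c$, because $d_C(c,c')\le\lfloor g/2\rfloor<g-2$. So you may delete a single vertex and get $h+g-1$.

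What works here, and what the paper does in general, is to use all three terminals $x,u,w$ at once. Take a shortest cycle $K$, a vertex $z\in V(K)$, and a forest $F$ outside $K$ with $x,u,w\in(V(K)\setminus\{z\})\cup F$, such that every component of $G[F]$ sends exactly one edge into $K-z$. Then $(V(K)\setminus\{z\})\cup F$ induces a tree on $g-1+|F|$ vertices; in the example, the three legs give $3h+8$. Routing the three pairwise paths through this structure uses each off-cycle edge at most twice and at most $g$ cycle edges in total, so
\[
3f+1\le d(x,u)+d(x,w)+d(u,w)\le g+2|F|.
\]
Combined with $\tr(G)\ge g-1$ and $\tr(G)\ge g-1+|F|$, this yields the theorem except in one integer case: $g\equiv2\pmod3$, $f=\lfloor g/3\rfloor+1$, and every admissible forest has one vertex. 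The paper disposes of that case by showing $G$ is then $C_g$ with one pendant vertex. The cases $g\in\{3,4\}$ are handled separately, via $\tr(G)\ge\diam(G)+1$ and an induced tree through three terminals of a triangle-free graph.

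The substantive step your proposal has no substitute for is proving that such an admissible forest containing all three terminals exists when $g\ge5$. The paper does this with a minimal-connector argument, including a switch to a different shortest $5$-cycle in one configuration.
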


The proof is self-contained apart from elementary facts about finite graphs.
Its structural core is Lemma~\ref{w142-lem:rooted-cycle}, which turns three metric
terminals into an admissible forest attached to a shortest cycle.  The
resulting metric inequality, Lemma~\ref{w142-lem:three-point}, is then combined
with a short classification of the equality case in which the admissible
forest has only one vertex.

\subsection{Preliminaries}

Write $D=\diam(G)$ and $B=\Per(G)$.  A shortest path is induced, so
\begin{equation}\label{w142-eq:diam-path}
  \tr(G)\ge D+1.
\end{equation}
A shortest cycle $K$ is chordless and isometric: for $u,v\in V(K)$, their
graph distance equals their shorter arc-distance in $K$.  Deleting one
vertex of $K$ therefore gives
\begin{equation}\label{w142-eq:cycle-base}
  \tr(G)\ge g-1.
\end{equation}
If $f\ge1$, then
\begin{equation}\label{w142-eq:D-f}
  D\ge f+1.
\end{equation}
Indeed, always $f\le D$; equality would give a vertex $x$ at distance $D$
from every peripheral vertex, hence $\ecc(x)=D$ and $x\in B$, contradicting
$f=d(x,B)\ge1$.

For a shortest cycle $K$, define $M(K)$ to be the maximum of $|F|$ over all
sets $F\subseteq V(G)\setminus V(K)$ for which $G[F]$ is a forest and there
is a vertex $z\in V(K)$ such that every component $C$ of $G[F]$ sends
exactly one edge into $V(K)\setminus\{z\}$.  Edges from $C$ to $z$ are
unrestricted.

\begin{lemma}[Cycle--forest extension]\label{w142-lem:cycle-forest}
For every shortest cycle $K$,
\[
  \tr(G)\ge g-1+M(K).
\]
\end{lemma}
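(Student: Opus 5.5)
The plan is to exhibit, for an admissible set $F$ with its vertex $z\in V(K)$, an explicit induced tree of order $g-1+|F|$: I will show that
\[
  S:=(V(K)\setminus\{z\})\cup F
\]
induces a tree. Since $F\cap V(K)=\emptyset$, we have $|S|=(g-1)+|F|$. Taking $F$ with $|F|=M(K)$ then gives $\tr(G)\ge g-1+M(K)$.

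First I describe $G[V(K)\setminus\{z\}]$. As recalled above, a shortest cycle is chordless, so $G[V(K)]$ is exactly the cycle $K$. Deleting $z$ leaves an induced path $Q$ on $g-1$ vertices, which has $g-2$ edges.

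Next I list every edge of $G[S]$, and there are three kinds.
\begin{itemize}
\item Edges inside $V(K)\setminus\{z\}$, which are the $g-2$ edges of $Q$.
\item Edges inside $F$. These all lie inside the components $C$ of $G[F]$, since distinct components of an induced subgraph have no edges between them. Each $C$ is a tree because $G[F]$ is a forest, so $C$ contributes $|C|-1$ edges.
\item Edges between $F$ and $V(K)\setminus\{z\}$. By admissibility, each component $C$ sends exactly one such edge.
\end{itemize}
Edges from $F$ to $z$ play no role, because $z\notin S$; this is why they may be left unrestricted.

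Connectivity then follows directly. $Q$ is connected, each component $C$ is connected, and each $C$ is joined to $Q$ by its single edge, so $G[S]$ is connected.

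For acyclicity I count edges. If $c$ is the number of components of $G[F]$, then
\[
 |E(G[S])|=(g-2)+\sum_C(|C|-1)+c=(g-2)+|F|=|S|-1 .
\]
A connected graph whose number of edges is one less than its number of vertices is a tree. Equivalently, $G[S]$ is built from the path $Q$ by attaching trees, each by a single pendant edge, which clearly creates no cycle.

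The only step requiring real care is making sure the edge list for $G[S]$ is complete. That relies on two facts: $K$ has no chords, and there are no edges between distinct components of $G[F]$. Both hold automatically, so I expect no genuine obstacle.
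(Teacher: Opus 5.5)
Your proof is correct and follows the paper's argument: the paper also shows that $(V(K)\setminus\{z\})\cup F$ induces a connected graph whose $g-2$ path edges, $|F|-c$ forest edges and $c$ attachment edges number one fewer than its $g-1+|F|$ vertices, so it is a tree. You additionally spell out that $K$ is chordless and that there are no edges between distinct components of $G[F]$, both of which the paper uses tacitly.
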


\begin{proof}
Choose $F,z$ realizing $M(K)$ and let $c$ be the number of components of
$G[F]$.  The graph induced by
$(V(K)\setminus\{z\})\cup F$ is connected.  Its edges consist of the $g-2$
edges of the path $K-z$, the $|F|-c$ forest edges, and one attachment edge
for each of the $c$ components.  Thus it has $g-1+|F|$ vertices and one
fewer edge, and hence is a tree.
\end{proof}

\begin{lemma}[An extra cycle vertex]\label{w142-lem:extra}
Suppose $f\ge1$.  If $g\ge4$, then $\tr(G)\ge g$; if $g\ge5$, then
$M(K)\ge1$ for every shortest cycle $K$.
\end{lemma}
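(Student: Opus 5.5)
The plan is to treat the two claims separately, handling $g\ge5$ through a single pendant vertex and $g=4$ through a diameter argument. Both rely on one preliminary observation. If $V(G)=V(K)$ for a shortest cycle $K$, then $G=K$, since a shortest cycle is chordless. In a cycle every vertex has the same eccentricity, so $\Per(G)=V(G)$ and $f=0$. Hence $f\ge1$ forces $V(G)\ne V(K)$. By connectedness, there is then a vertex $w\notin V(K)$ with at least one neighbour on $K$.

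\emph{Case $g\ge5$.} Fix any shortest cycle $K$ and a vertex $w$ as above. I will show that $w$ has exactly one neighbour on $K$. Suppose $w$ had two neighbours $a,b\in V(K)$. Since $K$ is isometric, $d(a,b)$ is their shorter arc-distance, which is at most $\lfloor g/2\rfloor$. The shorter $a$--$b$ arc together with $wa,wb$ would then form a cycle of length at most $\lfloor g/2\rfloor+2$. This is strictly less than $g$ once $g\ge5$, contradicting minimality of $g$. So $w$ has a unique neighbour $a$ on $K$.

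Take $F=\{w\}$ and let $z$ be any vertex of $K$ other than $a$. The single component of $G[F]$ then sends exactly one edge into $V(K)\setminus\{z\}$, so $F$ is admissible and $M(K)\ge1$. By Lemma~\ref{w142-lem:cycle-forest} this also gives $\tr(G)\ge g-1+1=g$. Hence the first claim holds whenever $g\ge5$.

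\emph{Case $g=4$.} It remains to show $\tr(G)\ge4$. By \eqref{w142-eq:D-f} we have $D\ge2$. If $D\ge3$, then \eqref{w142-eq:diam-path} gives $\tr(G)\ge D+1\ge4$. So suppose $D=2$, and hence $f=1$. The definition of $f$ provides a vertex $x\notin\Per(G)$, so $\ecc(x)<D=2$, that is, $\ecc(x)=1$ and $x$ is adjacent to every other vertex. Since $G$ contains a cycle, it has an edge $uv$ with $u,v\ne x$: a dominating vertex with an independent remainder would make $G$ a star, which is acyclic. But then $xuv$ is a triangle, contradicting $g=4$. Thus $D\ge3$ and $\tr(G)\ge4=g$.

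I expect no real obstacle here. The only delicate points are the girth computation $\lfloor g/2\rfloor+2<g$, which is exactly where $g\ge5$ is needed (it fails at $g=4$), and remembering to exclude $V(G)=V(K)$ via $f\ge1$.
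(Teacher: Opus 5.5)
Your proof is correct. For $g\ge5$, including the preliminary exclusion of $V(G)=V(K)$, it is the paper's argument: a single outside vertex with a unique root on $K$, $F=\{w\}$, $z$ any other cycle vertex, and then Lemma~\ref{w142-lem:cycle-forest}.

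The difference is the case $g=4$. The paper stays local at the cycle. The same outside vertex $y$ may now also have two antipodal neighbours on the $4$-cycle $K$; two adjacent neighbours would give a triangle. In either case, deleting a suitable cycle vertex leaves an induced star on four vertices.

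You argue globally instead. If $D=2$, a vertex outside $\Per(G)$ has eccentricity $1$ and so dominates $G$. Since $G$ contains a cycle it is not a star, which yields a triangle. Hence $D\ge3$, and a diametral geodesic gives $\tr(G)\ge D+1\ge4$ by \eqref{w142-eq:diam-path}.

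The paper's route handles both girth ranges with one and the same outside vertex and uses nothing but the girth. Yours recycles the metric preliminaries \eqref{w142-eq:D-f} and \eqref{w142-eq:diam-path}, and gives the by-product that $g\ge4$ and $f\ge1$ together force $D\ge3$. It cannot replace the cycle argument for $g\ge5$, since $D\ge3$ only yields $\tr(G)\ge4$, so the split into two cases is essential in your approach.
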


\begin{proof}
If $V(G)=V(K)$, chordlessness and connectedness give $G=C_g$, whose every
vertex is peripheral, contrary to $f\ge1$.  Hence some vertex $y\notin K$
is adjacent to $K$.  When $g\ge5$, it has exactly one neighbour on $K$:
two neighbours and the shorter arc between them would form a cycle of
length at most $\lfloor g/2\rfloor+2<g$.  Taking $F=\{y\}$ and deleting a
cycle vertex other than its root proves both assertions.  For $g=4$, the
only additional possibility is that $y$ has two antipodal neighbours on
$K$; deleting one of them again leaves an induced tree on four vertices.
\end{proof}

We shall also need two elementary connector lemmas.

\begin{lemma}[Three terminals]\label{w142-lem:three-tree}
Every three distinct vertices of a connected triangle-free graph lie in a
common induced tree.
\end{lemma}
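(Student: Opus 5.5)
Fix three distinct vertices $x_1,x_2,x_3$ of the connected triangle-free graph $G$. The idea is to grow a minimal connected subgraph through the three terminals and then show that some such subgraph is an induced tree. We begin by taking a shortest $x_1$--$x_2$ path $P$. Shortest paths are induced, so $V(P)$ induces a tree (a path). If $x_3\in V(P)$ we are finished. Otherwise we choose a vertex set $S\supseteq V(P)\cup\{x_3\}$ with $G[S]$ connected and $|S|$ as small as possible. The goal is to show that $G[S]$ is acyclic, or, failing that, to modify $S$ so that it becomes acyclic.

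The cleaner route is to attach $x_3$ to $P$ directly. Let $Q$ be a shortest path from $x_3$ to the set $V(P)$, and let $w$ be the first vertex on $Q$, scanning from $x_3$, that has a neighbour in $V(P)$. Every vertex of $Q$ before $w$ has no neighbour on $P$, and $Q$ is itself induced. It follows that the only edges between the $x_3$--$w$ segment $Q'$ and $P$ are those at $w$. If $w$ has exactly one neighbour on $P$, then $V(P)\cup V(Q')$ induces a tree (a subdivided star or a path), and we are done.

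The main obstacle is the case where $w$ has several neighbours $p_1,\dots,p_k$ on $P$, listed in path order. Triangle-freeness rules out consecutive vertices of $P$ here. Since $P$ is a shortest path, any two of these neighbours are at distance at most $2$ along $P$, so $k\le 3$, and all of them lie within a window of $P$ of the form $p_1\,u\,p_2\,u'\,p_3$. To repair this, we replace the subpath of $P$ from $p_1$ to $p_k$ by the detour through $w$. Concretely, we delete the interior of that window and keep $p_1$ and $p_k$, both joined to $w$.

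We must then check three things. First, the detour keeps the terminals $x_1,x_2$; if an endpoint lies in the deleted interior, we instead delete only the parts not containing a terminal, or drop edges so that $w$ meets $P$ at one end. Second, it creates no chords. Third, the new path $x_1\ldots p_1\,w\,p_k\ldots x_2$ is again shortest or at least induced. Because $\operatorname{dist}_P(p_1,p_k)\le 2$ and the detour has length $2$, the modified path has the same length, so it is still a shortest $x_1$--$x_2$ path, now passing through $w$. This handles the case $k=2$. For $k=3$ the window has length $4$ while the detour has length $2$, which would contradict the minimality of $P$; so this case cannot occur.

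Once $w$ lies on a shortest $x_1$--$x_2$ path, the segment $Q'$ meets it only at $w$. Hence the union induces a tree containing all three terminals. The case where a terminal is itself a deleted interior vertex is the delicate bookkeeping step. There, if $x_1=u$ say, we keep $u$ and drop $w$'s edge to one side instead, so that $P$ is retained and $Q'$ attaches at the neighbour $p_1$ through a path ending at $w$ adjacent to $p_1$ only in the induced subgraph chosen. We will handle this by re-choosing $w$'s attachment among $p_1$ and $p_2$ and appealing to triangle-freeness to exclude extra edges.
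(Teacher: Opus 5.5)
Your argument is essentially the paper's: you attach $x_3$ to the geodesic $P$ by a shortest path whose last off-path vertex $w$ has either one neighbour on $P$ or exactly two at $P$-distance $2$, and in the second case you delete the middle vertex $u$. Your pairwise-distance observation already gives $k\le 2$, with no need to consider $k=3$, and checking that $x_1\cdots p_1wp_2\cdots x_2$ is again a geodesic, hence induced, is a tidy substitute for the paper's vertex--edge count.

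The only thing to correct is your final paragraph, because the ``delicate'' case cannot occur. The vertex $u$ lies strictly between $p_1$ and $p_2$ on $P$, so it is not an endpoint of $P$, and $u\ne x_3$ because $x_3\notin V(P)$; this is the paper's remark $m\notin\{a,b\}$. That is fortunate, since ``dropping'' one of $w$'s edges is not available for induced subgraphs, where $\{w,p_1,u,p_2\}$ would span a $4$-cycle.
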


\begin{proof}
Let $P$ be an $a$--$b$ geodesic.  If $c\in V(P)$, then $P$ itself is the
required induced tree.  Otherwise, let $R=r_0\cdots r_\ell$ be a shortest
path from $c=r_0$ to $P$.  Put $q=r_{\ell-1}$ and
$R^\circ=R-r_\ell$.  Only $q$ can have additional neighbours on $P$.
Any two such neighbours have $P$-distance at most two, and triangle-freeness
excludes distance one; hence $q$ has at most two neighbours on $P$.
With one neighbour, $P\cup R^\circ$ induces a tree.  With two neighbours
$u,v$, their $P$-subpath is $u m v$; deleting $m$ joins the two components
of $P-m$ through $q$ and yields a connected induced graph with one fewer
edge than vertices.  Since $m\notin\{a,b\}$, this is the required tree.
\end{proof}

\begin{lemma}[Cycles in a minimal connector]\label{w142-lem:minimal-connector}
Let $J$ be connected and $Q\subseteq V(J)$.  If no proper induced connected
subgraph of $J$ contains $Q$, then every cycle $L$ of $J$ has
$|V(L)|\le |Q|$.
\end{lemma}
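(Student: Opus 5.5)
The idea is to show that every vertex of $L$ must be "needed" by a distinct terminal of $Q$, and then count. Minimality means that for every vertex $x$ of $J$, either $x\in Q$ or $J-x$ fails to connect $Q$. The second alternative comes in two forms. Either $J-x$ is disconnected with vertices of $Q$ in at least two of its components, or $J-x$ is connected but, being a proper induced connected subgraph, would contain $Q$; the latter is impossible. Here we use that a connected subgraph of $J$ containing $Q$ can be taken to be a component of $J-x$ that contains all of $Q$, and that component is itself induced. So the first step is the observation that every vertex of $J$ is either in $Q$ or a cut vertex of $J$ separating two vertices of $Q$. Indeed, if $x\notin Q$ and all of $Q$ lies in one component of $J-x$, then that component is a proper induced connected subgraph containing $Q$, contradicting minimality.

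Now let $L$ be a cycle of $J$ and take $x\in V(L)\setminus Q$. Then $x$ is a cut vertex. The rest of $L$, namely $L-x$, is a path and hence connected, so it lies in a single component $C_x$ of $J-x$. Since $x$ separates $Q$, some other component $D_x\neq C_x$ of $J-x$ contains a terminal $q_x\in Q$. Assign $q_x$ to $x$; for $x\in V(L)\cap Q$, assign $x$ itself.

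The key step is to show this assignment is injective, which gives $|V(L)|\le|Q|$. Suppose two distinct vertices $x,y\in V(L)$ receive the same terminal $q$. We handle the cases as follows.
\begin{itemize}
\item If $y\in Q$ with $q=y$, then $q$ lies on $L$ and $q\neq x$, so $q\in L-x\subseteq C_x$, contradicting $q\in D_x$.
\item If both $x,y\notin Q$, then $q\in D_x$. The vertex $y$ lies on $L-x$, hence in $C_x$. So any $q$--$y$ path in $J$ must pass through $x$. Symmetrically, any $q$--$x$ path passes through $y$. Take a shortest $q$--$x$ path; it contains $y$, so its initial segment from $q$ to $y$ is a $q$--$y$ path avoiding $x$. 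That is a contradiction.
\end{itemize}

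I expect the main care to be needed in the first step, where the statement's "no proper induced connected subgraph contains $Q$" must be correctly converted into the cut-vertex property. One must argue that the component of $J-x$ containing $Q$ is induced in $J$, which it is, being a component of an induced subgraph. The degenerate case $Q=\emptyset$ also needs a word: then any single vertex is a proper connected induced subgraph unless $|V(J)|\le1$, so $J$ has no cycles and the bound is vacuous.
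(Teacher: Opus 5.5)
Your proof is correct and follows essentially the paper's route: for each $v\in V(L)$, the component of $J-v$ containing the path $L-v$ is a proper induced connected subgraph, so minimality produces a terminal $q_v$ outside it, and you then show $v\mapsto q_v$ is injective. The only difference is how injectivity is checked. The paper fixes, for each $q\in Q$, the first vertex $c(q)$ on $L$ of a shortest $q$--$L$ path and observes $c(q_v)=v$, so $c$ is a left inverse and the case $q_v=v$ needs no separate treatment. You instead verify injectivity directly, by case analysis and the path argument with two cut vertices.
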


\begin{proof}
For every $q\in Q$, choose a shortest path from $q$ to $L$, and denote its
unique first vertex on $L$ by $c(q)$.  Fix $v\in L$.  The path $L-v$ lies
in one component $J_v$ of $J-v$.  Minimality implies that some
$q_v\in Q$ is outside $J_v$.  Its chosen path to $L$ must therefore end at
$v$, so $c(q_v)=v$.  Distinct vertices of $L$ give distinct terminals,
which injects $V(L)$ into $Q$.
\end{proof}

\subsection{A rooted shortest-cycle lemma}

\begin{lemma}[Rooted shortest cycle]\label{w142-lem:rooted-cycle}
Assume $g\ge5$, and let $S\subseteq V(G)$ with $|S|\le3$.  There exist a
shortest cycle $K$, a vertex $z\in V(K)\setminus S$, and a set
$F\subseteq V(G)\setminus V(K)$ such that
\[
 S\subseteq (V(K)\setminus\{z\})\cup F,
\]
$G[F]$ is a forest, and every component $C$ of $G[F]$ satisfies
\[
 e(C,V(K)\setminus\{z\})=1.
\]
\end{lemma}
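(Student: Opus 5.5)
Fix a shortest cycle $K$ and grow a connector from $K$ to each terminal of $S$ outside $K$ along shortest paths, then choose $z$ to cut $K$ into a path that avoids $S$ and breaks the cycle. Since $g\ge5$, every vertex outside $K$ has at most one neighbour on $K$: two neighbours together with the shorter arc between them would give a cycle of length at most $\lfloor g/2\rfloor+2<g$, as in Lemma~\ref{w142-lem:extra}. For each $s\in S\setminus V(K)$ we take a shortest path $R_s$ from $s$ to $V(K)$ and let $F_0$ be the union of the interiors together with $s$ (i.e.\ $R_s$ minus its endpoint on $K$). Two questions remain: whether $G[F_0]$ is a forest, and how many attachment edges each component sends to $K$. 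We then choose $z\in V(K)\setminus S$ so that every component has exactly one attachment edge into $V(K)\setminus\{z\}$.

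The order of the argument is as follows.

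\textbf{Step~1 (freedom in $K$).} Among all shortest cycles and all choices of shortest paths, we pick a configuration minimizing $|F_0|$, or more precisely the total length $\sum_s d(s,K)$; we also allow $K$ itself to vary. This minimality is used to exclude bad configurations by rerouting.

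\textbf{Step~2 (structure of $G[F_0]$).} With at most three paths, Lemma~\ref{w142-lem:minimal-connector} controls cycles. We apply it to a minimal connected induced subgraph of $G[F_0\cup V(K)]$ containing $S$ together with a suitable part of $K$. A cycle in $G[F_0]$, or one formed through $K$ by two paths, must have length at least $g\ge5$. Replacing the terminal set by the at most three terminals plus the roots on $K$ bounds such cycles, so after pruning $F_0$ to a minimal connector we may assume $G[F_0]$ is a forest. Each vertex of a path $R_s$ other than its penultimate vertex has no $K$-neighbour, by the shortest-path property. So the only edges from a component $C$ to $K$ come from the penultimate vertices of the paths in $C$, each sending exactly one edge by the girth observation above.

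\textbf{Step~3 (choice of $z$).} A component $C$ may contain two or three penultimate vertices, for instance when paths merge. Then it sends two or three edges to $K$, with distinct or equal roots. If the roots coincide, the paths can be merged, contradicting minimality. If $C$ has roots $r_1\ne r_2$, then $C$ plus an arc of $K$ forms a cycle; choosing $z$ equal to one root kills the extra edge. The constraint $z\notin S$ and the need to handle several components simultaneously make a counting argument necessary. The set $V(K)$ has $g\ge5$ vertices and at most three lie in $S$, so there is room for $z$. Each multi-root component forces $z$ into its root set, and at most one component can have two or more roots when $|S|\le3$, since each root needs its own terminal. If that component's roots are all in $S$, we instead delete that root's edge by shortening: we replace $K$ by the cycle through $C$, which must be a shortest cycle by minimality, or reroute.

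I expect the main obstacle to be Step~3, specifically the case where a merged component has two roots on $K$ that both lie in $S$, or where three paths create a component with three roots. My first attempt there is to exchange $K$ for a different shortest cycle using the arc and the connector, arguing by minimality of $\sum_s d(s,K)$ that the new cycle absorbs a terminal and strictly decreases the potential. Failing that, I would drop one connecting path and reconnect that terminal through the other component via Lemma~\ref{w142-lem:three-tree}-style geodesic surgery.
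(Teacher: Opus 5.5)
Your outline has the right ingredients: a connector from $K$ to the terminals, unique roots for $g\ge5$, and choosing $z$ at a root. But the two steps where the lemma has real content are not carried out.

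\textbf{Step~2 does not give a forest.} Lemma~\ref{w142-lem:minimal-connector} bounds cycle lengths by the size of the terminal set. Your terminal set, ``the three terminals plus the roots on $K$'', may have six elements, which excludes nothing when $g\in\{5,6\}$. The paper takes $X\supseteq V(K)\cup S$ of minimum size with $G[X]$ connected. For each component $C$ of $G[X]-V(K)$ it replaces $K$ by a \emph{single} new vertex $r_C$ adjacent to the attachments. Minimality of $X$ makes this graph $J_C$ a minimal connector of $S_C\cup\{r_C\}$. Hence its cycles have length at most $|S_C|+1\le4<g$, and $G[C]$ is a tree.

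\textbf{Step~3 lacks the attachment-distance bound.} The same injection argument shows that the minimal subtree $R_C$ spanning the attachments has at most $|S_C|$ vertices. So $d_C(a,a')\le2$ for any two attachments. This bound is what excludes equal roots, since they would close a cycle of length at most $4$; ``the paths can be merged'' is not an argument. Together with $d_K(\rho(a),\rho(a'))+d_C(a,a')+2\ge g$, it also shows that a component has at most two attachments unless $g=5$.

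\textbf{You misplace the difficulty.} A component with two attachments contains at least two terminals. So the set of cycle vertices that $z$ must avoid (terminals on $K$ and roots of other components) has at most one element, and one of the two roots is always a legal $z$. The case ``both roots in $S$'' cannot occur.

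\textbf{The decisive case is left open.} The genuine obstacle is $g=5$ with $R_C=p-q-r$ consisting of three attachments. Their roots satisfy $d_K(u,v)=d_K(v,w)=2$, so $K=u\alpha v\beta wu$. The paper switches to the other $5$-cycle $K'=u\alpha vqpu$ and takes $z=\alpha$. It lets $F'$ consist of $V(L_p)\setminus\{p\}$, $V(L_q)\setminus\{q\}$ and $V(L_r)$. Each piece meets $K'-\alpha$ exactly once: at $p$, at $q$, and through the edge $rq$.

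Your idea of excluding this case by choosing $K$ extremally can be made to work. However, you do not show that $\sum_s d(s,K)$ decreases. A potential that provably does is the minimum connector size over all shortest cycles: $\{u,\alpha,v\}\cup V(C)$ connects $V(K')\cup S$ using two fewer vertices than $X$.

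Likewise, your claim that the cycle formed by $C$ and an arc of $K$ ``must be a shortest cycle by minimality'' has no justification in general. It is the specific configuration forced by $g=5$ that makes $K'$ a $5$-cycle.
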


\begin{proof}
The empty case is immediate.  Fix an arbitrary shortest cycle $K$, and
choose $X\supseteq V(K)\cup S$ of minimum cardinality such that
$H:=G[X]$ is connected.

Every component $C$ of $H-V(K)$ contains a terminal.  Indeed, it has an
edge to $K$, since $H$ is connected; if $S_C:=S\cap V(C)$ were empty, the
whole component could be deleted while preserving connectedness and all
required vertices.  Put
\[
 A_C:=\{a\in V(C):N_G(a)\cap V(K)\ne\varnothing\}.
\]
Form $J_C$ from $G[C]$ by adjoining a new vertex $r_C$ adjacent precisely
to $A_C$.  This graph is vertex-minimal among induced connected subgraphs
containing $S_C\cup\{r_C\}$.  Otherwise, replacing $C$ by a smaller
connector would leave every resulting component attached to $K$ and would
contradict the choice of $X$.

Lemma~\ref{w142-lem:minimal-connector} shows that every cycle in $J_C$ has length
at most $|S_C|+1\le4$.  A cycle contained in $G[C]$ would also be a cycle of
$G$ and would have length at least $g\ge5$.  Consequently
\begin{equation}\label{w142-eq:C-tree}
  G[C]\text{ is a tree}.
\end{equation}

Let $R_C$ be the minimal subtree of $C$ containing $A_C$.  We claim
\begin{equation}\label{w142-eq:R-small}
  |V(R_C)|\le |S_C|.
\end{equation}
For $v\in V(R_C)$, let $Q_v$ be the component of $J_C-v$ containing $r_C$.
Minimality supplies $s_v\in S_C\setminus V(Q_v)$.  For $s\in S_C$, let
$c(s)$ be the first vertex of $R_C$ on the unique path from $s$ to $R_C$.
Every nonempty component of $R_C-v$ contains a vertex of $A_C$, by the
minimality of $R_C$.  Thus, if $v\ne c(s)$, the path from $s$ to $c(s)$ and
then to such an attachment gives an $s$--$r_C$ path in $J_C-v$; hence
$s\in Q_v$.  Applying this to $s_v$ gives $c(s_v)=v$.  The map
$v\mapsto s_v$ is injective, proving~\eqref{w142-eq:R-small}.

Every $a\in A_C$ has exactly one neighbour on $K$.  Existence follows from
the definition, while two distinct neighbours and their shorter $K$-arc
would form a cycle of length at most $\lfloor g/2\rfloor+2<g$.  Denote the
unique root by $\rho(a)$.  If $a,a'\in A_C$ are distinct, then
\begin{equation}\label{w142-eq:attachment-distance}
 d_C(a,a')\le |V(R_C)|-1\le2
\end{equation}
and their roots are distinct: equal roots, together with the
$a$--$a'$ path in $C$, would form a cycle of length at most four.  Moreover,
the two root edges, that path, and a shorter arc of $K$ give
\begin{equation}\label{w142-eq:girth-roots}
 d_K(\rho(a),\rho(a'))+d_C(a,a')+2\ge g.
\end{equation}

These inequalities sharply restrict the number of attachments.  If
$g\ge9$, then two attachments would make the left side of
\eqref{w142-eq:girth-roots} at most $\lfloor g/2\rfloor+4<g$, so there is only
one.  For $g\in\{7,8\}$, two attachments cannot be adjacent in $C$; three
attachments would fill the three vertices of $R_C$ and contain an adjacent
pair.  For $g=6$, if $R_C=p-q-r$ consisted of three attachments, applying
\eqref{w142-eq:girth-roots} to $p,q$ and to $q,r$ would force both $\rho(p)$ and
$\rho(r)$ to be the unique antipode of $\rho(q)$ on the $6$-cycle,
contrary to distinctness.  Thus every component has at most two
attachments, except that for $g=5$ one component may have three.  Also, at
most one component has two or more attachments, because such a component
contains at least two terminals and $|S|\le3$.

Suppose first that every component has one attachment $a_C$.  The set
\[
 Q=(S\cap V(K))\cup\{\rho(a_C):C\text{ a component of }H-V(K)\}
\]
has cardinality at most $|S|\le3$.  Since $g\ge5$, choose $z\in V(K)\setminus
Q$ and take $F=X\setminus V(K)$.  Equation~\eqref{w142-eq:C-tree} gives a forest,
each component has its single attachment in $Q$, and $z\notin S$.

Next suppose that one component $C_0$ has exactly two attachments with roots
$u,v$.  It contains at least two terminals, so the set
\[
 Q=(S\cap V(K))\cup
   \{\rho(a_C):C\ne C_0\}
\]
has size at most one.  Choose $z\in\{u,v\}\setminus Q$ and again put
$F=X\setminus V(K)$.  Every ordinary component has one edge into $K-z$,
while $C_0$ has exactly the two root edges and loses exactly one of them.
Again $z\notin S$.

It remains to handle three attachments.  Necessarily $g=5$,
$|V(R_C)|=|S_C|=|S|=3$, there are no other off-cycle components, and
$R_C=p-q-r$ with all three vertices attachments.  Write
\[
 u=\rho(p),\qquad v=\rho(q),\qquad w=\rho(r).
\]
Equation~\eqref{w142-eq:girth-roots} gives $d_K(u,v)=d_K(v,w)=2$.  In cyclic
order, write $K=u\alpha v\beta w u$.  Then
\[
  K'=u\alpha v q p u
\]
is another shortest $5$-cycle.

The injection used to prove~\eqref{w142-eq:R-small} is now a bijection.  Hence
there are distinct terminals $s_p,s_q,s_r$ with $c(s_t)=t$.  Let $L_t$ be
the unique $t$--$s_t$ path in the tree $C$, and set
\[
 F'=(V(L_p)\setminus\{p\})\cup
    (V(L_q)\setminus\{q\})\cup V(L_r),
 \qquad z=\alpha.
\]
The three displayed path-pieces are pairwise disjoint and anticomplete, for
an edge between two of them would create a cycle in the tree $C$.  The first
two nonempty pieces attach once to $K'$ at $p$ and $q$, respectively.  The
third attaches once through the edge $rq$; its other old-cycle edge $rw$
does not meet $K'$.  Thus $F'$ is admissible for $(K',z)$.  Finally each
$s_t$ belongs either to its path-piece or, when $s_p=p$ or $s_q=q$, directly
to $K'-z$.  This completes the exceptional case and the proof.
\end{proof}

\subsection{The three-point bridge}

Let
\[
  \mu:=\max\{M(K):K\text{ is a shortest cycle of }G\}.
\]

\begin{lemma}[Three-point inequality]\label{w142-lem:three-point}
If $g\ge5$, then every three distinct vertices $a,b,c$ satisfy
\begin{equation}\label{w142-eq:three-point}
 d(a,b)+d(b,c)+d(c,a)\le g+2\mu.
\end{equation}
\end{lemma}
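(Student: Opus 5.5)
Apply Lemma~\ref{w142-lem:rooted-cycle} with $S=\{a,b,c\}$. This gives a shortest cycle $K$, a vertex $z\in V(K)\setminus S$, and an admissible forest $F$ with $S\subseteq (V(K)\setminus\{z\})\cup F$. By the proof of Lemma~\ref{w142-lem:cycle-forest}, the set $Y:=(V(K)\setminus\{z\})\cup F$ induces a tree $T$ on $g-1+|F|$ vertices, and $|F|\le M(K)\le\mu$. Moreover $T$ is a subgraph of $G$, so distances in $G$ are bounded by distances in $T$. It therefore suffices to show
\[
 d_T(a,b)+d_T(b,c)+d_T(c,a)\le g+2|F|,
\]
or a hybrid version of this in which some distances are measured in $G$ along the cycle.

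The natural tool is the tree identity. For three vertices of a tree, $d_T(a,b)+d_T(b,c)+d_T(c,a)=2\,e(T_{abc})$, where $T_{abc}$ is the minimal subtree spanning $a,b,c$. Its edges split into cycle-path edges (inside the path $K-z$) and edges incident to $F$. The number of edges incident to $F$ in $T$ is $|F|$, one per forest vertex, since each component contributes its internal edges plus one attachment edge. Hence $e(T_{abc})\le |F| + e_K$, where $e_K$ is the number of edges of $K-z$ used by the spanning subtree. The trivial bound $e_K\le g-2$ gives $2e(T_{abc})\le 2(g-2)+2|F|$, which is too weak by a factor of $2$ on the cycle part.

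To fix this, I will not use $T$-distances on the cycle portion but $G$-distances along $K$ itself. Each terminal $t$ projects to a cycle vertex $\pi(t)$: its root on $K$, reached via $d_F$-steps that all lie in $F$, or $t$ itself if $t\in K$. Then
\[
 d(x,y)\le h(x)+d_K(\pi(x),\pi(y))+h(y),
\]
where $h(t)$ is the length of the path from $t$ to its root. Any three points on a $g$-cycle satisfy $d_K(u,v)+d_K(v,w)+d_K(w,u)\le g$, because the three pairwise arc-distances are each at most the complementary arcs, which together sum to $g$. Summing gives $\sum d\le g+2\sum_t h(t)$.

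The remaining step, and the main obstacle, is to show $\sum_t h(t)\le|F|$. This is not automatic: two terminals in the same component $C$ may share path edges, and $h$ counts shared edges twice. In that case I will instead route the pair inside $C$, using $d(x,y)\le d_C(x,y)$, and bound more carefully. For terminals in a common component I apply the tree identity within $C\cup\{\text{root}\}$. The edges counted then form the spanning subtree of those terminals together with the attachment edge, which involves at most $|V(C)|$ edges, each traversed at most twice over the cyclic sum; the cycle contribution is handled by the three-point cycle bound. Components containing a single terminal contribute $2h(t)\le 2|V(C)|$. Doing this case analysis (all in separate components, two together, all three together) should yield $\sum d\le g+2|F|$ in each case. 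The subtle point is the exceptional $g=5$ configuration produced in the last part of the proof of Lemma~\ref{w142-lem:rooted-cycle}. There the roots collide via $K'$, and I would check directly that the bound still holds using $K'$.
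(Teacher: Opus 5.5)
Your proposal is correct and follows the paper's proof: apply Lemma~\ref{w142-lem:rooted-cycle}, route each pair of terminals through the attached forest to its root and then along a shortest arc of $K$, bound the cycle part by $g$, and bound the off-cycle part by $2|F|\le 2\mu$. The paper counts the off-cycle edges uniformly (an edge separating $q$ terminals from its root is used $q(3-q)\le 2$ times), which your case split by component with the Steiner-tree identity also establishes. The separate check of the exceptional $g=5$ configuration is unnecessary, because your argument uses only the conclusion of the lemma, and that conclusion holds for $K'$ as well.
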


\begin{proof}
Apply Lemma~\ref{w142-lem:rooted-cycle} to $S=\{a,b,c\}$ and obtain $K,z,F$.
Retain all edges of $K$, all edges inside the components of $G[F]$, and each
component's unique edge into $K-z$; omit possible edges to $z$.  The resulting
spanning subgraph $U$ of $G[K\cup F]$ is a single cycle $K$ with rooted trees
attached.  It has exactly $|F|$ edges outside $K$: a component with $k$
vertices contributes $k-1$ internal edges and one root edge.

For each pair of terminals, use their unique path in $U$ up to the cycle,
cancelling any common root segment when both terminals lie in the same rooted
tree, and then use a shortest arc of $K$ between their roots.  An off-cycle
edge separates from its root $q$ of the three terminals.  If its rooted tree
contains $h$ terminals, then its total multiplicity in the three selected
paths is
\[
  q(h-q)+q(3-h)=q(3-q)\le2.
\]
Thus the total off-cycle contribution is at most $2|F|$.

The sum of the three pairwise distances between any three roots on a cycle of
length $g$ is at most $g$.  Indeed, if the roots are distinct and the three
intervening arc-lengths are all at most $g/2$, their sum is exactly $g$; if
one arc exceeds $g/2$, the other two provide the shorter route for its
endpoints and the sum is smaller.  Coincident roots are immediate.  Hence
the three chosen paths have total length at most
$g+2|F|\le g+2M(K)\le g+2\mu$.  Graph distances can only be shorter.
\end{proof}

\begin{corollary}\label{w142-cor:metric-mu}
Assume $g\ge5$ and $f\ge1$.  Then
\begin{equation}\label{w142-eq:metric-mu}
  2\mu+g\ge3f+1.
\end{equation}
\end{corollary}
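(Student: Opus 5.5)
We want $2\mu+g\ge 3f+1$ under $g\ge5$ and $f\ge1$. The plan is to find three distinct vertices whose pairwise distances sum to at least $3f+1-2\mu+2\mu$, i.e. at least $3f+1$, and then apply Lemma~\ref{w142-lem:three-point}. Since the lemma gives $d(a,b)+d(b,c)+d(c,a)\le g+2\mu$, it suffices to exhibit distinct $a,b,c$ with
\[
 d(a,b)+d(b,c)+d(c,a)\ge 3f+1 .
\]

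The natural choice is as follows. Pick $x$ with $d(x,B)=f$, where $B=\Per(G)$. Pick a peripheral vertex $p\in B$ and a vertex $q$ with $d(p,q)=D$. Since $\ecc(p)=D$, any such $q$ also has $\ecc(q)\ge D$, hence $q\in B$. Because $x\notin B$ (as $f\ge1$), the three vertices $x,p,q$ are distinct. We have $d(x,p)\ge f$, $d(x,q)\ge f$, and $d(p,q)=D\ge f+1$ by \eqref{w142-eq:D-f}. Summing gives
\[
 d(x,p)+d(x,q)+d(p,q)\ge 2f+(f+1)=3f+1 .
\]
Combining this with \eqref{w142-eq:three-point} yields \eqref{w142-eq:metric-mu}.

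The only points needing care are routine. First, a diametral partner of a peripheral vertex is itself peripheral, because its eccentricity is at least $D$ and never exceeds $D$. Second, $x$ is distinct from $p$ and $q$, which follows from $d(x,B)=f\ge1$. I do not expect a genuine obstacle here; the substantive work was already done in Lemmas~\ref{w142-lem:rooted-cycle} and~\ref{w142-lem:three-point}.
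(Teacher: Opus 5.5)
Your proposal is correct and follows the paper's proof: take an $f$-realizer $x$ and a diametral pair of peripheral vertices, bound the three pairwise distances below by $f$, $f$, and $D\ge f+1$, and apply Lemma~\ref{w142-lem:three-point}. Your explicit distinctness check, with $x\notin\Per(G)$ and $p\ne q$ since $D\ge f+1\ge2$, is a detail the paper leaves implicit.
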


\begin{proof}
Choose $x$ with $d(x,B)=f$ and a diametral pair $b,w$.  Both $b,w$ are
peripheral, so $d(x,b),d(x,w)\ge f$ and $d(b,w)=D\ge f+1$ by
\eqref{w142-eq:D-f}.  Apply Lemma~\ref{w142-lem:three-point} to $x,b,w$.
\end{proof}

The estimate is almost sufficient by itself.  The only delicate integer
case is disposed of by the following sharp classification.

\begin{lemma}[The case $\mu=1$]\label{w142-lem:mu-one}
Assume $g\ge5$ and $f\ge1$.  If $\mu=1$, then $G$ is obtained from $C_g$ by
attaching one pendant vertex to one cycle vertex.  Consequently
\begin{equation}\label{w142-eq:tadpole-f}
  f\le\left\lfloor\frac{g+2}{4}\right\rfloor.
\end{equation}
\end{lemma}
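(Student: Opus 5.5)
The plan is to show that $\mu=1$ forces every vertex off a shortest cycle to be a single pendant vertex attached to that cycle, and then to count. Fix a shortest cycle $K$. As in the proof of Lemma~\ref{w142-lem:extra}, $f\ge1$ gives $V(G)\ne V(K)$. Since $G$ is connected, some $y\notin V(K)$ is adjacent to $K$, and because $g\ge5$ it has exactly one neighbour $\rho(y)$ on $K$. This is the structure we will exploit throughout.

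\textbf{No second layer.} Suppose some vertex $y'$ is at distance $2$ from $K$, adjacent to a vertex $y$ with $d(y,K)=1$. Then $F=\{y,y'\}$ induces a single edge. If $y'$ has no neighbour on $K$, this component sends exactly one edge, namely $y\rho(y)$, into $K-z$ for any $z\ne\rho(y)$. Hence $M(K)\ge2$, contradicting $\mu=1$. So every off-cycle vertex is adjacent to $K$.

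\textbf{At most one off-cycle vertex.} Suppose $y_1\ne y_2$ are both adjacent to $K$. If $y_1y_2$ is not an edge, then $F=\{y_1,y_2\}$ is a forest with two singleton components. Choosing $z\notin\{\rho(y_1),\rho(y_2)\}$, which is possible since $g\ge5$, makes each component send exactly one edge into $K-z$, so again $M(K)\ge2$. If $y_1y_2$ is an edge, then the roots are distinct by girth, and $F=\{y_1,y_2\}$ with $z=\rho(y_2)$ leaves exactly one edge into $K-z$. Hence $M(K)\ge2$ in this case as well. The one subtlety I expect to check is that the hypothesis $\mu=1$ refers to the maximum over all shortest cycles, but our fixed $K$ is itself a shortest cycle, so the bound $M(K)\le\mu=1$ applies to it directly. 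Finally, $G[V(K)]$ is chordless. Together these show that $G$ is $C_g$ with one pendant vertex $y$ attached at $\rho(y)$.

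\textbf{The bound on $f$.} Now compute eccentricities in this tadpole graph. Put $r=\rho(y)$ and $D=\lfloor g/2\rfloor+1$. The eccentricity of $y$ is $D$. A cycle vertex $c$ has eccentricity $\max(\lfloor g/2\rfloor,\,d_K(c,r)+1)$. Consequently the periphery consists of $y$ together with the cycle vertices at $K$-distance $\lfloor g/2\rfloor$ from $r$. These are the antipode when $g$ is even, and the two near-antipodes when $g$ is odd. Next, maximise the distance to this set over all vertices. The farthest candidates are cycle vertices lying roughly midway, in the arc sense, between $r$ and the antipodal peripheral vertices. The distance to $y$ is $d_K(\cdot,r)+1$, and the distance to the antipodal set is measured along the cycle. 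Balancing $t+1$ against $\lfloor g/2\rfloor-t$ (for even $g$), or against $\lfloor g/2\rfloor-t$ with the two near-antipodes (for odd $g$), gives a maximum of $\lfloor (g+2)/4\rfloor$. This last optimisation is routine, and I would verify it separately for $g$ even and $g$ odd. The main step is the structural one, and in particular the adjacent-pair case, which is where the choice of $z$ matters.
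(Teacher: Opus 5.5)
Your proposal is correct and follows the paper's proof essentially step for step. You rule out vertices at distance two from $K$, and then a second off-cycle vertex, using the same two-vertex admissible forests: $z$ avoids both roots in the nonadjacent case and is one of the roots in the adjacent case. Your final balancing of $t+1$ against $\lfloor g/2\rfloor-t$ is exactly the paper's estimate $\min(i+1,k-i)\le\lfloor (k+1)/2\rfloor=\lfloor (g+2)/4\rfloor$.
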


\begin{proof}
Fix a shortest cycle $K$.  Since $M(K)\le\mu=1$, no vertex can have distance
two from $K$: the first two off-cycle vertices on a geodesic to $K$ would
form an admissible two-vertex path.  Thus every vertex outside $K$ is
adjacent to $K$, and by $g\ge5$ it has a unique root.

There cannot be two such vertices $u,v$.  If they are nonadjacent, choose
$z$ different from both roots; the two singleton components are admissible.
If they are adjacent, their roots are distinct (otherwise there is a
triangle), and choosing $z$ to be one root makes $\{u,v\}$ a connected
admissible component.  Either way $M(K)\ge2$, a contradiction.  Since
$f\ge1$, there is exactly one outside vertex, and $G$ is the stated tadpole.

Put $k=\lfloor g/2\rfloor$, let $y$ be the pendant vertex, and let $v_0$ be
its root.  The diameter is $k+1$; the periphery consists of $y$ and the cycle
vertices at cycle-distance $k$ from $v_0$.  Every cycle vertex at position
$i$ on one of the $v_0$--antipode arcs is at distance at most
\[
  \min(i+1,k-i)\le\left\lfloor\frac{k+1}{2}\right\rfloor
  =\left\lfloor\frac{g+2}{4}\right\rfloor
\]
from the displayed peripheral vertices.  The pendant vertex itself is
peripheral, proving~\eqref{w142-eq:tadpole-f}.
\end{proof}

\subsection{Completion of the proof}

\begin{proposition}\label{w142-prop:large-girth}
Theorem~\ref{w142-thm:main} holds when $g\ge5$ and $f\ge1$.
\end{proposition}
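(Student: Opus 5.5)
The plan is to combine three lower bounds already available: the cycle--forest bound $\tr(G)\ge g-1+\mu$ (Lemma~\ref{w142-lem:cycle-forest}, applied to a shortest cycle realizing $\mu$), the metric estimate $2\mu+g\ge 3f+1$ (Corollary~\ref{w142-cor:metric-mu}), and the classification of the case $\mu=1$ (Lemma~\ref{w142-lem:mu-one}). Write $g=3m+r$ with $r\in\{0,1,2\}$, so that $g-\lceil 2g/3\rceil=m$. The target $\tr(G)\ge f+\lceil 2g/3\rceil$ then follows from Lemma~\ref{w142-lem:cycle-forest} as soon as
\[
 \mu\;\ge\; f+1-m .
\]
The whole proof is integer bookkeeping showing that this always holds, once the tadpole case has been set aside.

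\textbf{Case $\mu=1$.} By Lemma~\ref{w142-lem:mu-one}, $G$ is a tadpole and $f\le\lfloor (g+2)/4\rfloor$. I will check that $\lfloor (g+2)/4\rfloor\le\lfloor g/3\rfloor=m$ for all $g\ge5$. The small values $g=5,6,7,8$ can be verified directly, and for larger $g$ we have $(g+2)/4\le g/3-1$. Hence $f\le m$, so $f+1-m\le 1=\mu$, as required. (Alternatively, one could compute $\tr$ of the tadpole directly, but this is unnecessary.)

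\textbf{Case $\mu\ge2$.} This settles every $f\le m+1$. Note $\mu=0$ is excluded by Lemma~\ref{w142-lem:extra}, since $g\ge5$ and $f\ge1$. For $f\ge m+2$, use the metric bound $\mu\ge\lceil(3f+1-g)/2\rceil$. We have $(3f+1-g)/2\ge f+1-m$ if and only if $f\ge g+1-2m=m+r+1$.
\begin{itemize}
\item If $r\le1$, this holds because $f\ge m+2$.
\item If $r=2$, it holds for all $f\ge m+3$.
\item The only leftover case is $r=2$ and $f=m+2$. There $2\mu\ge 3(m+2)+1-(3m+2)=5$, so by integrality $\mu\ge3=f+1-m$.
\end{itemize}

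This covers all cases and gives $\tr(G)\ge g-1+\mu\ge f+\lceil 2g/3\rceil$.

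The only delicate point is the interplay between the integrality of $\mu$ and the residue of $g$ modulo $3$. This is why Lemma~\ref{w142-lem:mu-one} is needed: with only $\mu\ge1$, the window $f=m+1$ would fail. I expect this step, rather than any structural argument, to be the main thing to get exactly right; the structural work is already contained in the cited lemmas.
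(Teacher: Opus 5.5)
Your proof is correct and is essentially the paper's. Both reduce the claim to $\mu\ge f+1-\lfloor g/3\rfloor$ (the paper's $\mu\ge s+1$ with $f=a+s$) via Lemma~\ref{w142-lem:cycle-forest}. Both then close it with the same tools: $\mu\ge1$ from Lemma~\ref{w142-lem:extra}, the metric bound of Corollary~\ref{w142-cor:metric-mu}, and the tadpole bound of Lemma~\ref{w142-lem:mu-one}.

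The only organizational difference is the order of the case split. You split first on $\mu=1$ versus $\mu\ge2$ and apply the tadpole bound for every residue of $g$. The paper first disposes of $f\le a-1$ with the bound $\tr(G)\ge g-1$ and then splits on $g \bmod 3$. It needs the tadpole lemma only in the single case $g=3a+2$, $f=a+1$, where $\lfloor(3a+4)/4\rfloor\le a$ is immediate for $a\ge1$. In your $\mu=1$ branch with $g\not\equiv2\pmod 3$, the metric bound alone already gives $f\le m$.

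There is one arithmetic slip to fix. The inequality $(g+2)/4\le g/3-1$ holds only for $g\ge18$ (for instance $g=9$ gives $11/4>2$). So your checks of $g=5,\dots,8$ plus this inequality leave $9\le g\le17$ unjustified. The claim $\lfloor(g+2)/4\rfloor\le\lfloor g/3\rfloor$ is nevertheless true for all $g\ge5$. For $g\ge6$, $\lfloor(g+2)/4\rfloor\le(g+2)/4\le g/3<\lfloor g/3\rfloor+1$, and $g=5$ is checked directly. With that line replaced, the rest of your integer bookkeeping is complete, including the subcase $r=2$, $f=m+2$, where $2\mu\ge5$ forces $\mu\ge3$.
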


\begin{proof}
Put $a=\lfloor g/3\rfloor$.  Since
$\lceil2g/3\rceil=g-a$, the target is
\begin{equation}\label{w142-eq:target-a}
  \tr(G)\ge f+g-a.
\end{equation}
If $f\le a-1$, the cycle bound~\eqref{w142-eq:cycle-base} gives
$\tr(G)\ge g-1\ge f+g-a$.  Hence write $f=a+s$ with $s\ge0$.

Lemma~\ref{w142-lem:extra} gives $\mu\ge1$, while
Lemma~\ref{w142-lem:cycle-forest} gives $\tr(G)\ge g-1+\mu$.  It therefore
suffices to prove
\begin{equation}\label{w142-eq:mu-goal}
  \mu\ge s+1.
\end{equation}
Corollary~\ref{w142-cor:metric-mu} and the three residues of $g$ give:
\begin{align*}
 g=3a   &: &2\mu&\ge3s+1,\\
 g=3a+1 &: &2\mu&\ge3s,\\
 g=3a+2 &: &2\mu&\ge3s-1.
\end{align*}
In the first case, $\mu\le s$ would imply $2\mu\le2s<3s+1$.
In the second, $s=0$ is covered by $\mu\ge1$, while for $s\ge1$ we have
$2s<3s$.  In the third, $s=0$ is again immediate, and $s\ge2$ gives
$2s<3s-1$.

The only remaining possibility is $g=3a+2$ and $s=1$.  If $\mu=1$,
Lemma~\ref{w142-lem:mu-one} yields
\[
 f\le\left\lfloor\frac{3a+4}{4}\right\rfloor\le a,
\]
because $a\ge1$, contradicting $f=a+1$.  Hence $\mu\ge2=s+1$.
This proves~\eqref{w142-eq:mu-goal}, and therefore~\eqref{w142-eq:target-a}.
\end{proof}

\begin{proposition}\label{w142-prop:small-girth}
Theorem~\ref{w142-thm:main} holds for $g=3$ and $g=4$ when $f\ge1$.
\end{proposition}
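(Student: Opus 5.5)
For $g=3$ the target is $\tr(G)\ge f+\lceil 2\rceil=f+2$, and this follows from the preliminaries alone: since $f\ge1$, \eqref{w142-eq:D-f} gives $D\ge f+1$, and the geodesic bound \eqref{w142-eq:diam-path} gives $\tr(G)\ge D+1\ge f+2$.

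For $g=4$ the target is $\tr(G)\ge f+3$. I plan to use Lemma~\ref{w142-lem:three-tree}, which applies because $g=4$ makes $G$ triangle-free. As in Corollary~\ref{w142-cor:metric-mu}, choose $x$ with $d(x,B)=f$ and a diametral pair $b,w$. Then $d(x,b),d(x,w)\ge f$ and $d(b,w)=D\ge f+1$. Since $f\ge1$, $x\notin B$, so $x,b,w$ are three distinct vertices. Lemma~\ref{w142-lem:three-tree} provides an induced tree $T$ containing all three.

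The key counting step is to bound $|V(T)|$ from below by the three distances. Let $\sigma$ be the sum of the three tree distances between $x,b,w$ in $T$. Each edge of $T$ separates the three terminals into parts of sizes $q$ and $3-q$, so it is used in at most $q(3-q)\le 2$ of the three tree paths. Hence $|E(T)|\ge \sigma/2$. Tree distances dominate graph distances, so
\[
 |V(T)|\;\ge\;1+\tfrac12\bigl(d(x,b)+d(x,w)+d(b,w)\bigr)\;\ge\;1+\tfrac{3f+1}{2}.
\]
Because $|V(T)|$ is an integer, $|V(T)|\ge\lceil(3f+3)/2\rceil$. This is at least $f+3$ precisely when $f\ge2$: for $f=2$ it gives $\lceil 9/2\rceil=5$, and for $f\ge3$ we have $(3f+3)/2\ge f+3$.

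The remaining case $f=1$ needs $\tr(G)\ge4=g$, which is exactly the first assertion of Lemma~\ref{w142-lem:extra}, valid for $g\ge4$ and $f\ge1$. The only real obstacle is therefore the edge-multiplicity count for $T$. It is the same $q(3-q)$ argument as in the proof of Lemma~\ref{w142-lem:three-point}, and here it is simpler because $T$ is a tree. One must also check that the small value $f=1$ falls outside the range where the count suffices; Lemma~\ref{w142-lem:extra} covers it.
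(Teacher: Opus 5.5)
Your proposal is correct and follows the paper's proof essentially step for step: for $g=3$ you use $\tr(G)\ge D+1\ge f+2$, and for $g=4$ you take the three-terminal induced tree through $x,b,w$, count edge multiplicities to get $|V(T)|\ge 1+(3f+1)/2\ge f+3$ when $f\ge2$, and invoke Lemma~\ref{w142-lem:extra} for $f=1$. The differences are cosmetic: you bound each edge's multiplicity by $q(3-q)\le2$ on all of $T$, whereas the paper passes to the minimal subtree $T_0$ where the multiplicity is exactly two, and you explicitly check that $x,b,w$ are distinct, which the paper leaves implicit.
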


\begin{proof}
For $g=3$, equations~\eqref{w142-eq:diam-path} and~\eqref{w142-eq:D-f} give
$\tr(G)\ge D+1\ge f+2=f+\lceil2g/3\rceil$.

Let $g=4$.  Choose an $f$-realizer $x$ and a diametral pair $b,w$.  The graph
is triangle-free, so Lemma~\ref{w142-lem:three-tree} supplies an induced tree $T$
containing $x,b,w$.  Let $T_0$ be the minimal subtree containing them.  Every
edge of $T_0$ lies on exactly two of the three terminal-to-terminal paths;
therefore
\[
  2|E(T_0)|=d_T(x,b)+d_T(x,w)+d_T(b,w)
   \ge 2f+D\ge3f+1.
\]
If $f\ge2$, then
\[
 |V(T)|\ge1+\left\lceil\frac{3f+1}{2}\right\rceil\ge f+3.
\]
If $f=1$, Lemma~\ref{w142-lem:extra} gives $\tr(G)\ge g=4=f+3$.
\end{proof}

\begin{proof}[Proof of Theorem~\ref{w142-thm:main}]
The preceding two propositions cover cyclic graphs with $f\ge1$.  If $G$ is
cyclic and $f=0$, then $g-1\ge\lceil2g/3\rceil$ for $g\ge3$, so
\eqref{w142-eq:cycle-base} applies.  This proves the integral cyclic assertion.

If $G$ is acyclic, connectedness makes $G$ itself a tree, so
$\tr(G)=|V(G)|$ and $f\le D\le |V(G)|-1$ (with the one-vertex case
immediate).  Since $\gr(G)=0$, the real-valued Formal Conjectures inequality
also follows.  Finally,
$\lceil2g/3\rceil\ge2g/3$, completing the proof.
\end{proof}

\subsection{Verification and formalization status}\label{w142-sec:verification}

The proof above was checked in two independent forms.  First, a separate
constructive proof based on descents to an arbitrary shortest cycle was
implemented literally: every branch produces its cycle vertex $z$, its
admissible forest, or its induced path, and a fresh checker verifies
inducedness, connectedness, acyclicity, edge multiplicities, and the claimed
cardinality.  On a corpus of 10,776 connected cyclic graphs (the complete
NetworkX atlas through seven vertices, structured extremal families, and
seeded random and adversarial graphs through 32 vertices), the validator
reported no failure.  Repeating every free choice with two independent
random seeds again gave no failure, for 32,328 checked certificates in all.
These computations are supporting checks, not a substitute for the proof.

The exact statement of Conjecture~142 is formalized in the Google DeepMind
\emph{Formal Conjectures} repository~\cite{FC,FC142}, using the repository's
largest-induced-tree invariant, Mathlib's natural-valued girth, and the
set-eccentricity of the boundary (peripheral) vertices.  We have produced a
complete, \texttt{sorry}-free Lean~4 proof of that statement.  The proof is
contained in \texttt{GraphConjecture142Proof.lean} at
\href{https://github.com/AlperTheKing/formal-conjectures/commit/46bf39015f5c3c3ba3bfcf9f752b4b1e49b584ac}{commit \texttt{46bf390}}; the upstream statement is
marked solved and imports this proof in pull request \#4457~\cite{FC4457}.
The development was compiled against the repository's pinned toolchain and
contains no \texttt{sorry} or \texttt{admit}.  Both the proof file and the
updated statement file are included with this submission as ancillary files.

\begin{remark}[Sharpness]
The $g=3$ diameter bound is attained by many graphs.  Among the exhaustively
checked small graphs, the only equality class of girth greater than three is
the $6$-cycle with one pendant vertex: here $f=2$ and
$\tr(G)=6=f+\lceil2g/3\rceil$.  In the proof this is exactly the
$\mu=1$ tadpole case of Lemma~\ref{w142-lem:mu-one}.
\end{remark}

\paragraph{Acknowledgements.}
The author thanks Ermelinda DeLaVi\~na and Douglas B.\ West for maintaining
the Graffiti.pc conjecture lists, and the maintainers of the Formal
Conjectures repository for the formal statement.

\section{Conjecture 143: the second-smallest degree}
\subsection{Introduction}

Graffiti.pc, developed by DeLaVi\~na~\cite{DeL01,DeLhistory} as a successor of
Fajtlowicz's program Graffiti~\cite{Faj88}, generates conjectural inequalities
between graph invariants.  Its conjectures are collected in the lists
\emph{Written on the Wall II}~\cite{WOWII}; a curated register with commentary
is maintained by West~\cite{WestReg}.  Many of these conjectures concern the
invariant $\tr(G)$, the largest order of an induced subgraph of $G$ that is a
tree, an invariant whose systematic study goes back to Erd\H{o}s, Saks, and
S\'os~\cite{ESS86}.

Throughout, $G$ is a finite simple graph.  We write $\gr(G)$ for the girth of
$G$ (the length of a shortest cycle), and $\dd(G)$ for the \emph{second-smallest
degree} of $G$: the second entry, with multiplicity, of the degree sequence of
$G$ sorted in nondecreasing order.  Thus $\dd(G)$ equals the minimum degree
$\delta(G)$ whenever at least two vertices attain the minimum degree.  A
\emph{leaf} is a vertex of degree one.

Conjecture~143 of Written on the Wall II, dated 2005, reads as
follows~\cite{WestReg}: \emph{if $G$ is connected and not a tree, then
$\tr(G)\ge(\gr(G)+1)/\dd(G)$.}  At the time of writing, West's current
entry contains no solution note~\cite{WestReg}, and the conjecture is stated
as a research-open item in the upstream Google DeepMind \emph{Formal
Conjectures} repository~\cite{FC,FC143}.  We prove it in the equivalent,
denominator-free form.

\begin{theorem}\label{w143-thm:main}
Let $G$ be a finite simple connected graph that is not a tree.  Then
\[
  \tr(G)\,\dd(G)\;\ge\;\gr(G)+1 .
\]
\end{theorem}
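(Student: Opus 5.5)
The plan is to split on the value of $\dd(G)$. The case $\dd(G)\ge2$ is immediate. The case $\dd(G)=1$ means $G$ has at least two leaves, and it will be handled by a two-leaf induced-tree lemma asserting $\tr(G)\ge\gr(G)+1$. Since $G$ is connected and not a tree, it contains a cycle, so $g=\gr(G)\ge3$. Also $\dd(G)\ge1$ unless $G=K_1$, which is a tree.

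\emph{Case $\dd(G)\ge2$.} Deleting one vertex of a shortest (hence chordless) cycle gives an induced path, so $\tr(G)\ge g-1$, exactly as in \eqref{w142-eq:cycle-base}. Then
\[
\tr(G)\,\dd(G)\ge 2(g-1)=g+1+(g-3)\ge g+1 .
\]

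\emph{Case $\dd(G)=1$.} Here $G$ has two distinct leaves $x,y$, and we must show $\tr(G)\ge g+1$. Fix a shortest cycle $K$; leaves are not on $K$. By Lemma~\ref{w142-lem:cycle-forest}, whose proof uses only that $K$ is chordless, it suffices to exhibit a vertex $z\in V(K)$ and a set $F\subseteq V(G)\setminus V(K)$ with $|F|\ge2$, $G[F]$ a forest, and every component of $G[F]$ sending exactly one edge into $V(K)\setminus\{z\}$. We split according to distances from $K$.

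\emph{(i) Every off-cycle vertex is adjacent to $K$.} Then $x$ and $y$ each have their unique neighbour on $K$. They are nonadjacent, since otherwise $\{x,y\}$ would be a $K_2$ component of the connected graph $G$. Choose $z$ different from their roots, which is possible because $g\ge3$. Then $F=\{x,y\}$ is admissible.

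\emph{(ii) Some vertex lies at distance $2$ from $K$.} Take a geodesic $u_2u_1k$ with $k\in V(K)$. Then $u_2$ has no neighbour on $K$, and we want $F=\{u_1,u_2\}$ with $z$ chosen so that $u_1$ retains exactly one neighbour in $K-z$.
\begin{itemize}
\item If $g\ge5$, $u_1$ has a unique root on $K$ by the arc argument of Lemma~\ref{w142-lem:extra}, so any other choice of $z$ works.
\item If $g=4$, $u_1$ has at most two neighbours on $K$, and if there are two they are antipodal (three would create a triangle). Taking $z$ to be one of them works.
\item If $g=3$, $u_1$ may see all three vertices of the triangle, so this case needs separate treatment; see below.
\end{itemize}

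The main obstacle is $g=3$, where we need an induced tree on $4$ vertices. If $u_1$ sees at most two triangle vertices, the same $z$-trick applies. Otherwise $u_1$ is adjacent to all of $K$, and I would instead use the leaves directly. First I would try a star: if some vertex $w$ has two nonadjacent neighbours $a,b$ together with a third neighbour nonadjacent to both, we obtain an induced $K_{1,3}$. For instance, if $x$ and $y$ share their neighbour $w$, then $\{w,x,y,k\}$ works for a suitable $k\in N(w)$; $\deg(w)\ge3$ because $G$ is connected, not a tree, and not $P_3$. Otherwise, with distinct neighbours $x'\ne y'$, I would take an induced path from $x$ to $y$; this path has at least $4$ vertices unless $x'y'$ is an edge, in which case $x\,x'y'y$ is itself an induced $P_4$. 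So in either case $\tr(G)\ge4$. I expect this small-girth bookkeeping, rather than the general argument, to require the most care.
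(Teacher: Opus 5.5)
Your proof is correct. The case $\dd(G)\ge2$ is the paper's argument verbatim, but for $\dd(G)=1$ you take a genuinely different route.

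\textbf{The paper's route.} It proves Lemma~\ref{w143-lem:twoleaf} by a maximality argument that never fixes a shortest cycle. Among induced trees containing both leaves it takes one of maximum order. It then finds an outside vertex $z$ with two tree-neighbours $a,b$. The tree path $P$ from $a$ to $b$ closes a cycle, so $|V(P)|\ge\gr(G)-1$. Finally, neither leaf can lie on $P$, because every vertex of $P$ has degree at least two in $G$.

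\textbf{Your route.} You root everything at a fixed shortest cycle $K$ and feed Lemma~\ref{w142-lem:cycle-forest}, whose proof indeed needs only chordlessness, an admissible forest on two vertices. When every off-cycle vertex touches $K$, the forest is the two leaves as pendant vertices. Otherwise it is a pendant edge $u_1u_2$ from a vertex at distance two, with $z$ chosen to cut a second root of $u_1$ when $\gr(G)=4$. When $\gr(G)=3$ and $u_1$ dominates the triangle, you give a direct four-vertex tree: a $K_{1,3}$ at a common neighbour of the leaves, or a shortest $x$--$y$ path. I checked each of these steps and they all hold. That last argument actually gives $\tr(G)\ge4$ whenever $G\ne P_3$ has two leaves, so it could dispose of girth three at the outset and remove the triangle subcase from (ii).

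\textbf{Comparison.} Your approach yields structural information. A vertex at distance two from a shortest cycle already forces $\tr(G)\ge\gr(G)+1$ without any leaves. The extremal tree is also explicit: a shortest cycle minus one vertex, plus two pendant vertices or a pendant edge. The paper's approach is shorter and uniform in the girth. Your $\gr(G)\in\{3,4\}$ bookkeeping exists only because off-cycle vertices may have several roots on $K$, and the paper needs no machinery from the Conjecture~142 section. The paper also delivers the stronger conclusion that the tree contains both prescribed leaves, which your case (ii) does not guarantee.
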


The proof splits on $\dd(G)$.  When $\dd(G)\ge2$, deleting one vertex of a
shortest (hence chordless) cycle leaves an induced path on $\gr(G)-1$ vertices,
and the inequality follows from $\gr(G)\ge3$ by arithmetic.  The substance of
the theorem is the case $\dd(G)=1$, in which $G$ has at least two leaves; here
we prove the following lemma, which may be of independent interest.

\begin{lemma}[Two-leaf lemma]\label{w143-lem:twoleaf-intro}
Let $G$ be a finite simple connected graph that contains a cycle and has at
least two vertices of degree one.  Then $G$ has an induced tree on at least
$\gr(G)+1$ vertices.
\end{lemma}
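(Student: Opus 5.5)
The plan is to run the same maximality argument as in the proof of Theorem~\ref{w141-thm:main}. This time we anchor the maximal induced tree at the two leaves rather than at a closed neighbourhood. Let $\ell_1\ne\ell_2$ be two vertices of degree one and put $g=\gr(G)\ge3$.

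\textbf{Step 1 (an anchored tree exists).} A shortest $\ell_1$--$\ell_2$ path in the connected graph $G$ is an induced path, hence an induced tree containing both leaves. Among all sets $S\supseteq\{\ell_1,\ell_2\}$ with $G[S]$ a tree, choose $S^*$ of maximum cardinality, and let $T=G[S^*]$. Since $T$ contains two distinct vertices and is connected, every vertex of $T$ has degree at least one in $T$.

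\textbf{Step 2 (an outside vertex with two tree neighbours).} $S^*\ne V(G)$, since otherwise $G=T$ would be a tree, contrary to the existence of a cycle. By connectedness some $z\notin S^*$ has a neighbour in $S^*$. If $z$ had only one neighbour in $S^*$, then $S^*\cup\{z\}$ would induce a larger tree still containing $\ell_1,\ell_2$, contradicting maximality. So $z$ has two distinct neighbours $a,b\in S^*$. Let $P$ be the $a$--$b$ path in $T$. Because $T$ is induced, $P$ together with $za,zb$ is a cycle of $G$, so $|E(P)|\ge g-2$ and $|V(P)|\ge g-1$.

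\textbf{Step 3 (the leaves avoid $P$).} This is the one new point, and I expect it to be the only place needing care. Suppose $\ell_i\in V(P)$. Since $\deg_G(\ell_i)=1$, the vertex $\ell_i$ has degree at most one in $P$, so it cannot be an internal vertex of $P$; hence $\ell_i\in\{a,b\}$. But then $\ell_i$ is adjacent to $z\notin S^*$, and it also has a neighbour inside $T$ by Step~1. That gives $\deg_G(\ell_i)\ge2$, a contradiction. Therefore $\ell_1,\ell_2\in S^*\setminus V(P)$, and
\[
 \tr(G)\ge|S^*|\ge|V(P)|+2\ge g+1 .
\]
This proves Lemma~\ref{w143-lem:twoleaf-intro}.

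For Theorem~\ref{w143-thm:main}, the case $\dd(G)=1$ follows at once from the lemma. Indeed, $\dd(G)=1$ means at least two vertices have degree at most one, and connectedness together with the presence of a cycle forces both to be leaves. The case $\dd(G)\ge2$ uses the induced path $K-v$ on $g-1$ vertices obtained from a shortest cycle $K$. With $g\ge3$ this gives $\tr(G)\,\dd(G)\ge2(g-1)\ge g+1$.
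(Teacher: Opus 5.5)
Your proof is correct and follows the paper's argument essentially step for step: you take a maximum induced tree containing both leaves, find an outside vertex $z$ with two tree neighbours $a,b$ whose tree path $P$ closes a cycle, and use a degree count to show that neither leaf lies on $P$ (you use a $T$-neighbour of an endpoint where the paper uses its $P$-neighbour, which is equivalent). One small remark: the cycle formed by $P$, $za$ and $zb$ exists because $z\notin S^*$ and $a\ne b$, not because $T$ is induced, though this does not affect the argument.
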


The lemma is proved by a maximality argument: among the induced trees
containing two prescribed leaves, a maximum-order one admits an outside vertex
with two neighbours on the tree, which closes a cycle avoiding both leaves.
Both the lemma and the theorem are sharp for every girth
(Section~\ref{w143-sec:sharp}).

\paragraph{Related work.}
The closest antecedent we are aware of is the induced-\emph{forest} bound of
DeLaVi\~na and Waller~\cite{DW04} (see also the discussion in
Hertz--Marcotte--Schindl~\cite{HMS14}): if $f(G)$ denotes the largest order of
an induced forest and $f_1(G)$ the number of leaves of a connected graph $G$
with a cycle, then $f(G)\ge\gr(G)+f_1(G)-1$.  With two leaves this produces an
induced forest on $\gr(G)+1$ vertices, but the forest need not be connected,
and the general forest-to-tree transfer of~\cite[Theorem~2.3]{HMS14} is too
lossy to recover Lemma~\ref{w143-lem:twoleaf-intro} from it.  A targeted search of
the foundational paper~\cite{ESS86} found bounds for $\tr(G)$ in terms of
order, size, radius, independence and clique numbers, but no bound involving
girth or the second-smallest degree.

\paragraph{Concurrent formal resolution and priority.}
The current registers carry no solution note for Conjecture~143
\cite{WestReg,FC143}, and targeted formula and citation searches around
\cite{ESS86,DW04,HMS14} located no published proof of the theorem or of
Lemma~\ref{w143-lem:twoleaf-intro}.  These negative searches are not proof of
absence.  Moreover, on July~16, 2026, one day before the present proof was
found, pull request \#4442 against the Formal Conjectures repository
announced a machine-assisted resolution via an externally hosted Lean
development~\cite{FC4442}; it was merged on July~21, 2026.  We compiled the
linked development against the repository's pinned toolchain and confirmed
that it proves the repository statement.
Accordingly, we make no claim of priority for resolving the conjecture.  The
elementary proof and formalization presented here were obtained independently;
because the argument is short and close in spirit to the 2004 forest bound,
we also do not claim that it was never observed before.

\paragraph{Verification.}
Beyond the human-readable proof below, Theorem~\ref{w143-thm:main} has been
formalized and machine-checked in Lean~4~\cite{Lean4} on top of
Mathlib~\cite{mathlib}, against the pre-existing formal statement of
Conjecture~143 in the Formal Conjectures repository~\cite{FC143}; see
Section~\ref{w143-sec:formal}.  As an independent falsification test, the theorem,
the lemma, and the constrained form of the maximality argument were checked by
two separately written exact checkers over the same atlas of all $1252$ nonempty unlabeled graphs of
order at most seven.  This includes all $971$ connected cyclic graphs in the
atlas, and the constrained check covered all $199$ unordered leaf pairs in
the relevant graphs.  No violation was found; the computation plays no role
in the proof.

\subsection{Notation}

All graphs are finite and simple.  For $S\subseteq V(G)$ we write $G[S]$ for
the induced subgraph.  A set $S$ \emph{induces a tree} if $G[S]$ is connected
and acyclic, and
\[
  \tr(G)\;=\;\max\{\,|S| : G[S]\ \text{is a tree}\,\}.
\]
A path or cycle \emph{in} $G$ is always a subgraph; a path $P$ is
\emph{induced} if $G[V(P)]=P$.  The girth $\gr(G)$ of a graph containing a
cycle is the minimum length of a cycle of $G$; note $\gr(G)\ge3$.  The degree
sequence of $G$ is the multiset of vertex degrees sorted in nondecreasing
order $d_1\le d_2\le\cdots\le d_n$, and $\dd(G)=d_2$.  This with-multiplicity
reading of ``second-smallest degree'' is the one fixed by the Graffiti.pc
definition list (Written on the Wall II, definition entry~65)~\cite{WOWII},
and it is the reading formalized in~\cite{FC143}.

We use two elementary facts.  First, in a connected graph on at least two
vertices every degree is positive; hence if $\dd(G)=d_2=1$ then $d_1=1$ as
well, so $G$ has at least two leaves.  Second, a shortest cycle $C$ of $G$ is
chordless, i.e.\ $G[V(C)]=C$: a chord splits $C$ into two cycles, each shorter
than $C$.

\subsection{The two-leaf lemma}

\begin{lemma}[Lemma~\ref{w143-lem:twoleaf-intro}, strengthened]\label{w143-lem:twoleaf}
Let $G$ be a finite simple connected graph that contains a cycle and has two
distinct vertices $x,y$ of degree one.  Then $G$ has an induced tree on at
least $\gr(G)+1$ vertices; in fact some induced tree of order at least
$\gr(G)+1$ contains both $x$ and $y$.
\end{lemma}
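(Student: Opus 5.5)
Fix the two leaves $x,y$ with neighbours $x',y'$. The plan mirrors the maximality argument of Theorem~\ref{w141-thm:main}. First, exhibit some induced tree containing both $x$ and $y$: take a shortest $x$--$y$ path $P$ in $G$, which is induced. Then, among all sets $S\supseteq\{x,y\}$ with $G[S]$ a tree, choose $S^*$ of maximum cardinality and put $T=G[S^*]$. If $S^*=V(G)$, then $G$ itself is a tree, contradicting the presence of a cycle. Hence, by connectedness, some $z\notin S^*$ has a neighbour in $S^*$. Maximality forces $z$ to have at least two distinct neighbours $a,b\in S^*$, since otherwise $S^*\cup\{z\}$ induces a larger tree still containing $x,y$.

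Next, bound the size of $T$ through the cycle this closes. Let $Q$ be the unique $a$--$b$ path in $T$. Then $Q$ together with $za$ and $zb$ is a cycle of $G$, so $|E(Q)|\ge\gr(G)-2$ and hence $|V(Q)|\ge\gr(G)-1$. The key observation is that neither $x$ nor $y$ lies on $Q$. Any vertex of this cycle has degree at least $2$ in $G$, while $x$ and $y$ have degree one. Hence $x,y\notin V(Q)$, and also $z\ne x,y$, since $z\notin S^*$.

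Finally, count: $S^*\supseteq V(Q)\cup\{x,y\}$ with $V(Q)$ disjoint from $\{x,y\}$, so
\[
|S^*|\ge(\gr(G)-1)+2=\gr(G)+1 .
\]
This gives the strengthened conclusion, with $x$ and $y$ both in the tree.

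The main point to check carefully is the existence step and the disjointness claim. For $x=y$-adjacency: if $x$ and $y$ are adjacent, $G$ is just the edge $xy$ and contains no cycle, so this case is excluded. The degree argument for disjointness is clean, since every vertex of a cycle, including $a$ and $b$ as cycle vertices, has degree at least two. I expect no real obstacle beyond the routine fact that the tree path plus the two edges through $z$ forms a genuine cycle. That holds because $a\ne b$ and $z\notin V(Q)$, so the cycle has length $|E(Q)|+2\ge3$.
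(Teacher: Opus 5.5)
Your proposal is correct and follows the paper's proof essentially step for step. A shortest $x$--$y$ path seeds the family of induced trees containing $x,y$; a maximum member admits an outside vertex $z$ with two neighbours $a,b$; the tree path $Q$ between them closes a cycle of length $|V(Q)|+1\ge\gr(G)$; and the degree-$\ge2$ argument keeps both leaves off $Q$, giving $|S^*|\ge|V(Q)|+2\ge\gr(G)+1$. Your aside about adjacent leaves is harmless but unnecessary, since the shortest path is induced in every case.
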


\begin{proof}
A shortest $x$--$y$ path in $G$ is induced (a chord would shorten it), and it
is a tree containing $x$ and $y$.  Hence the family of vertex sets
\[
  \mathcal{T}\;=\;\{\,S\subseteq V(G)\ :\ x,y\in S,\ G[S]\ \text{is a tree}\,\}
\]
is nonempty, and since $G$ is finite we may choose $S\in\mathcal{T}$ of
maximum cardinality.  Write $T=G[S]$.

The set $S$ is a proper subset of $V(G)$: if $S=V(G)$, then $G=T$ would be a
tree, contrary to the assumption that $G$ contains a cycle.  Since $G$ is
connected and $S$ is nonempty and proper, some edge of $G$ joins $S$ to its
complement; let $z\notin S$ be a vertex with a neighbour in $S$.

We claim $z$ has at least two neighbours in $S$.  Otherwise it has exactly
one, say $a$, and then $G[S\cup\{z\}]$ is the tree $T$ with the single pendant
vertex $z$ attached at $a$: it is connected, and acyclic because every cycle
of $G[S\cup\{z\}]$ through $z$ would need two distinct neighbours of $z$ in
$S$, while a cycle avoiding $z$ would lie in the tree $T$.  Thus
$S\cup\{z\}\in\mathcal{T}$ has larger cardinality than $S$, contradicting
maximality.

Choose distinct neighbours $a,b\in S$ of $z$, and let $P$ be the unique
$a$--$b$ path in the tree $T$.  The edges of $P$ together with $za$ and $zb$
form a (not necessarily induced) cycle of $G$ of length $|V(P)|+1$; possible
further edges from $z$ to $V(P)$ are irrelevant to its existence.  Hence
\begin{equation}\label{w143-eq:girth}
  \gr(G)\;\le\;|V(P)|+1 .
\end{equation}

Finally, neither $x$ nor $y$ lies on $P$.  Indeed, every internal vertex of
$P$ has two distinct neighbours on $P$, and each endpoint ($a$ or $b$) has one
neighbour on $P$ and the additional neighbour $z\notin S$; so every vertex of
$P$ has degree at least two in $G$, whereas $\deg_G(x)=\deg_G(y)=1$.  Since
$V(P)\subseteq S$ and $x,y\in S$, the sets $V(P)$ and $\{x,y\}$ are disjoint
subsets of $S$, so by~\eqref{w143-eq:girth}
\[
  \tr(G)\;\ge\;|S|\;\ge\;|V(P)|+2\;\ge\;\gr(G)+1 . \qedhere
\]
\end{proof}

\subsection{Proof of Theorem~\ref{w143-thm:main}}

\begin{proof}[Proof of Theorem~\ref{w143-thm:main}]
Let $G$ be connected and not a tree; then $G$ contains a cycle, so
$\gr(G)\ge3$, and $|V(G)|\ge3$, so all degrees are positive and $\dd(G)\ge1$.

\emph{Case $\dd(G)\ge2$.}  Let $C$ be a shortest cycle of $G$; as noted, $C$
is chordless.  Deleting one vertex of $C$ leaves an induced path on
$\gr(G)-1$ vertices, which is an induced tree, so $\tr(G)\ge\gr(G)-1$.  Hence
\[
  \tr(G)\,\dd(G)\;\ge\;2\,(\gr(G)-1)\;\ge\;\gr(G)+1,
\]
the last inequality being equivalent to $\gr(G)\ge3$.

\emph{Case $\dd(G)=1$.}  Since all degrees are positive and the second entry
of the sorted degree sequence equals one, the first entry equals one as well,
so $G$ has two distinct leaves.  Lemma~\ref{w143-lem:twoleaf} gives
$\tr(G)\ge\gr(G)+1$, and multiplying by $\dd(G)=1$ finishes the proof.
\end{proof}

\begin{remark}
The formal statement of Conjecture~143 in~\cite{FC143} quantifies over
connected graphs on a finite vertex type with at least two vertices and
positive $\dd$, without excluding trees, and uses the Mathlib convention that
an acyclic graph has girth $0$.  That extension is immediate: for a connected
tree $G$ on $n\ge2$ vertices, the whole vertex set induces a tree, so
$\tr(G)\,\dd(G)\ge n\ge2>1=\gr(G)+1$.  (Already $\tr(G)\ge1$ suffices.)
\end{remark}

\subsection{Sharpness}\label{w143-sec:sharp}

\begin{proposition}\label{w143-prop:sharp}
For every $g\ge3$ there is a connected non-tree graph $G$ with $\gr(G)=g$,
$\dd(G)=1$, and $\tr(G)=g+1$.  Moreover, for every $g\ge3$ there is a
connected non-tree graph $H$ with $\gr(H)=g$, $\dd(H)=2$, and
$\tr(H)=g-1$, so the case split of the proof is tight as well.
\end{proposition}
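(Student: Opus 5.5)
The plan is to give two explicit families, one for each half of the statement. For the first half, let $G$ be the cycle $C_g$ with two pendant vertices $x,y$ attached to two distinct cycle vertices $u,v$. The natural choice is to make $u,v$ adjacent, because then no induced tree can keep both of them together with much of the cycle. Every vertex degree in $G$ is $1,1,2,\dots,2,3,3$, so $\dd(G)=1$. The only cycle is $C_g$, so $\gr(G)=g$. Lemma~\ref{w143-lem:twoleaf} already gives $\tr(G)\ge g+1$; explicitly, delete a cycle vertex other than $u,v$ to obtain such a tree. It remains to prove the matching upper bound $\tr(G)\le g+1$. This is immediate: any induced tree must omit at least one cycle vertex, since the cycle is chordless and would otherwise appear inside it. Hence it has at most $(g-1)+2=g+1$ vertices. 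So the adjacency of $u,v$ is not even needed, and any placement of the two pendants works.

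For the second half, take $H=C_g$ itself. Every degree is $2$, so $\dd(H)=2$, and $\gr(H)=g$. An induced tree cannot contain all $g$ vertices, because then it would contain the cycle. Deleting one vertex leaves the induced path $P_{g-1}$, so $\tr(H)=g-1$. This shows that in the case $\dd\ge2$ the bound $\tr\ge\gr-1$ used in the proof is attained. It also gives $\tr\,\dd=2g-2$, which equals $g+1$ exactly when $g=3$, consistent with the arithmetic step in the proof.

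No step is a real obstacle. The only point needing care is the upper bound $\tr(G)\le g+1$: an induced tree of $G$ restricted to the cycle vertices is a proper subset of $V(C_g)$, because $G[V(C_g)]=C_g$ contains a cycle. The only other vertices are the two leaves, which gives the count.
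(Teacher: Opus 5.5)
Your proposal is correct and follows the paper's proof. It uses the same two families ($C_g$ with two pendant vertices at arbitrary attachment points, and $C_g$ itself), the same lower bound by deleting a cycle vertex that supports neither pendant, and the same upper bound from the fact that an induced tree cannot contain every vertex of the cycle (chordlessness is not needed there, since any set containing $V(C_g)$ already induces the cycle's edges).
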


\begin{proof}
For $G$, take a cycle $C_g$ and attach two pendant vertices (to arbitrary,
not necessarily distinct, cycle vertices).  Then $\gr(G)=g$ and $\dd(G)=1$.
At most two cycle vertices support the pendants; deleting a cycle vertex
supporting neither pendant leaves an induced subgraph on $g+1$ vertices that
is connected and acyclic, so $\tr(G)\ge g+1$.  No induced tree has $g+2$
vertices, since the only induced subgraph on all $g+2$ vertices is $G$
itself, which contains a cycle.  Hence $\tr(G)=g+1$, attaining equality in
Theorem~\ref{w143-thm:main} and Lemma~\ref{w143-lem:twoleaf}.

For $H$, take the cycle $C_g$ itself: $\dd(H)=2$, and the largest induced
trees are the paths obtained by deleting one vertex, of order $g-1$.  At
$g=3$ this attains equality in Theorem~\ref{w143-thm:main},
$\tr(H)\,\dd(H)=2\cdot2=4=\gr(H)+1$.
\end{proof}

\subsection{Formalization}\label{w143-sec:formal}

The Google DeepMind \emph{Formal Conjectures} project~\cite{FC,FCPaper} maintains
Lean~4 formalizations of open conjectures; Conjecture~143 was added in June
2026 as the statement \texttt{conjecture143} in the file
\texttt{FormalConjectures/WrittenOnTheWallII/GraphConjecture143.lean},
marked \texttt{research open}~\cite{FC143}.  The statement is the
denominator-free real-valued inequality
$\gr(G)+1\le\tr(G)\cdot\dd(G)$ for connected graphs on a finite
vertex type with at least two vertices, under the hypothesis $\dd(G)>0$, with
$\tr$ and $\dd$ given by the repository definitions
\texttt{largestInducedTreeSize} and \texttt{secondSmallestDegree} and with
Mathlib's $\mathbb{N}$-valued girth (which is $0$ on acyclic graphs).

\begin{sloppypar}
We have produced a complete, \texttt{sorry}-free Lean~4 proof of exactly this
statement, machine-checked with Lean toolchain v4.27.0 against the Mathlib
revision pinned by Formal Conjectures commit \texttt{c252a41}~\cite{mathlib}.
The development follows the paper proof, with the
supporting results proved as reusable graph-theoretic API
(\texttt{SimpleGraph} namespace): the one-vertex extension of an induced tree
along a unique neighbour (\path{IsTree.induce_insert_of_unique_adj});
induced trees from geodesics
(\path{Walk.induce_support_isTree_of_length_eq_dist}) and the induced
path obtained from a shortest cycle
(\path{girth_sub_one_le_largestInducedTreeSize}); the existence of a
maximum-order induced tree containing two prescribed vertices
(\path{exists_maximum_induced_tree_containing}); the boundary-vertex and
two-neighbour maximality arguments
(\path{Connected.exists_adj_finset_compl},
\path{exists_two_adj_of_maximum_induced_tree_containing}); the
cycle-closing certificate with the counting steps of Lemma~\ref{w143-lem:twoleaf}
(\path{IsTree.girth_add_one_le_card_of_two_leaves_of_two_adj},
\path{girth_add_one_le_largestInducedTreeSize_of_two_leaves}); and the
extraction of two leaves from $\dd(G)=1$
(\path{exists_distinct_degree_one_of_secondSmallestDegree_eq_one}).
The main theorem is then assembled by the case split of
Theorem~\ref{w143-thm:main} in under thirty lines.  The proof uses no
\texttt{native\_decide} and no project-specific axioms.  Auditing
\texttt{\#print axioms conjecture143} reports exactly \texttt{propext},
\texttt{Classical.choice}, and \texttt{Quot.sound}.  The proof source is
publicly archived in the author's fork~\cite{OurLean}.  The ancillary files
reproduce the branch after the metadata-only follow-up commit
\texttt{eb6445545bf839436642b895ad178cb397164dea}.
\end{sloppypar}

\paragraph{Acknowledgements.}
The author thanks the maintainers of the Formal Conjectures repository for
the formal statement, and Ermelinda DeLaVi\~na and Douglas B.\ West for
maintaining the Graffiti.pc conjecture lists.


\begin{thebibliography}{20}

\bibitem{DeL01}
E.~DeLaVi\~na,
\emph{Graffiti.pc},
Graph Theory Notes of New York XLII:3 (2002), 26--30.

\bibitem{DeLhistory}
E.~DeLaVi\~na,
\emph{Some history of the development of Graffiti},
in: Graphs and Discovery, DIMACS Ser.\ Discrete Math.\ Theoret.\ Comput.\
Sci.\ 69, AMS, 2005, 81--118.

\bibitem{WOWII}
E.~DeLaVi\~na,
\emph{Written on the Wall II: Conjectures of Graffiti.pc},
web list, University of Houston--Downtown,
\url{http://cms.dt.uh.edu/faculty/delavinae/research/wowII/}
(retrieved July 2026).

\bibitem{WestReg}
D.~B. West,
\emph{Some conjectures of Graffiti.pc}, web register,
\url{https://dwest.web.illinois.edu/regs/graffiti.html}
(retrieved July 2026).

\bibitem{Faj88}
S.~Fajtlowicz,
\emph{On conjectures of Graffiti},
Discrete Math.\ 72 (1988), 113--118.

\bibitem{ESS86}
P.~Erd\H{o}s, M.~Saks, and V.~T. S\'os,
\emph{Maximum induced trees in graphs},
J.~Combin.\ Theory Ser.\ B 41 (1986), 61--79.
\href{https://doi.org/10.1016/0095-8956(86)90028-6}
{doi:10.1016/0095-8956(86)90028-6}.

\bibitem{DW04}
E.~DeLaVi\~na and B.~Waller,
\emph{On some conjectures of Graffiti.pc on the maximum order of induced
subgraphs},
Congr.\ Numer.\ 166 (2004), 11--32.

\bibitem{HMS14}
A.~Hertz, O.~Marcotte, and D.~Schindl,
\emph{On the maximum orders of an induced forest, an induced tree, and a
stable set},
Yugosl.\ J.\ Oper.\ Res.\ 24 (2014), no.~2, 199--215.
\href{https://doi.org/10.2298/YJOR130402037H}{doi:10.2298/YJOR130402037H}.

\bibitem{FC}
The Formal Conjectures Authors,
\emph{The Formal Conjectures Repository},
GitHub repository, 2025,
\url{https://github.com/google-deepmind/formal-conjectures}.

\bibitem{FCPaper}
M.~Firsching, P.~Lezeau, S.~Mercuri, M.~Z.~Horv\'ath, Y.~Dillies,
C.~S\"onne, E.~Wieser, F.~Zhang, T.~Hubert, B.~Ag\"uera y Arcas,
and P.~Kohli,
\emph{Formal Conjectures: An Open and Evolving Benchmark for Verified
Discovery in Mathematics}, arXiv:2605.13171 (2026),
\url{https://arxiv.org/abs/2605.13171}.

\bibitem{FC141}
Formal Conjectures,
\emph{GraphConjecture141.lean},
\href{https://github.com/google-deepmind/formal-conjectures/blob/main/FormalConjectures/WrittenOnTheWallII/GraphConjecture141.lean}{source file}
(retrieved July 2026).

\bibitem{FC142}
Formal Conjectures,
\emph{GraphConjecture142.lean},
\href{https://github.com/google-deepmind/formal-conjectures/blob/main/FormalConjectures/WrittenOnTheWallII/GraphConjecture142.lean}{source file}
(retrieved July 2026).

\bibitem{FC143}
Formal Conjectures,
\emph{GraphConjecture143.lean},
\href{https://github.com/google-deepmind/formal-conjectures/blob/main/FormalConjectures/WrittenOnTheWallII/GraphConjecture143.lean}{source file}
(retrieved July 2026).

\bibitem{FC4442}
GitHub user \texttt{DomTheDeveloper},
\emph{Mark WOWII Graph Conjecture 143 solved},
Formal Conjectures pull request \#4442, July 16, 2026,
\url{https://github.com/google-deepmind/formal-conjectures/pull/4442}.

\bibitem{FC4454}
A.~Ferudun,
\emph{Prove WOWII Graph Conjectures 141 and 143},
Formal Conjectures pull request \#4454, July 2026,
\url{https://github.com/google-deepmind/formal-conjectures/pull/4454}.

\bibitem{FC4457}
A.~Ferudun,
\emph{Prove WOWII Graph Conjecture 142},
Formal Conjectures pull request \#4457, July 2026,
\url{https://github.com/google-deepmind/formal-conjectures/pull/4457}.

\bibitem{OurLean}
A.~Ferudun,
\emph{Lean 4 proof of WOWII Graph Conjecture 143},
\href{https://github.com/AlperTheKing/formal-conjectures/commit/6aab64fe70c1309463435260d104863dcef93807}{commit \texttt{6aab64f}}, 2026.

\bibitem{Lean4}
L.~de~Moura and S.~Ullrich,
\emph{The Lean 4 theorem prover and programming language},
in: Automated Deduction -- CADE 28, LNCS 12699, Springer, 2021, 625--635.

\bibitem{mathlib}
The mathlib Community,
\emph{The Lean mathematical library},
in: Proc.\ 9th ACM SIGPLAN Int.\ Conf.\ Certified Programs and Proofs
(CPP 2020), ACM, 2020, 367--381.

\end{thebibliography}
\end{document}